\newtheorem{theorem}{Theorem}
\newtheorem{lemma}{Lemma}
\newtheorem{proposition}{Proposition}
\begin{document}

\title{A family of spectral gradient methods for optimization
}


\author{Yu-Hong Dai \thanks{LSEC, ICMSEC, Academy of Mathematics and Systems Science, Chinese Academy of Sciences, 100190, Beijing, China; Mathematical Sciences, University of Chinese Academy of Sciences, Beijing 100049, China, Email: {dyh@lsec.cc.ac.cn} }~~~
        Yakui Huang  \thanks{Institute of Mathematics, Hebei University of Technology, Tianjin 300401, China, Email: {huangyakui2006@gmail.com}}~~~
        Xin-Wei Liu \thanks{Institute of Mathematics, Hebei University of Technology, Tianjin 300401, China, Email: {optim2008@163.com}}
}

\date{}
%

\maketitle

\begin{abstract}
\noindent
We propose a family of spectral gradient methods, whose stepsize is determined by a convex combination of the long Barzilai-Borwein (BB) stepsize and the short BB stepsize. Each member of the family is shown to share certain quasi-Newton property in the sense of least squares. The family also includes some other gradient methods as its special cases. We prove that the family of methods is $R$-superlinearly convergent for two-dimensional strictly convex quadratics. Moreover, the family is $R$-linearly convergent in the any-dimensional case. Numerical results of the family with different settings are presented, which demonstrate that the proposed family is promising.\\

\noindent
\textbf{Keywords}: unconstrained optimization; steepest descent method; spectral gradient method; $R$-linear convergence; $R$-superlinear convergence
\end{abstract}

\section{Introduction}
\label{intro}
Consider the unconstrained optimization problem
\begin{equation}\label{eqpro}
  \min_{x\in\mathbb{R}^n}~~f(x),
\end{equation}
where $f(x): \mathbb{R}^n\rightarrow\mathbb{R}$ is a continuously differentiable function. The gradient method solves problem \eqref{eqpro} by updating iterates as
\begin{equation}\label{eqitr}
  x_{k+1}=x_k-\alpha_kg_k,
\end{equation}
where $g_k=\nabla f(x_k)$ and $\alpha_k>0$ is the stepsize. Different gradient methods use different formulae for stepsizes.

The simplest gradient method is the steepest descent (SD) method due to Cauchy \cite{cauchy1847methode}, which computes the stepsize by exact line search,
\begin{equation*}\label{eqsd}
  \alpha_k^{SD}=\arg\min_{\alpha\in \mathbb{R}}~f(x_k-\alpha g_k).
\end{equation*}
As is well known, every two consecutive gradients generated by the SD method are perpendicular to each other. Moreover, if $f(x)$ is a strictly convex quadratic function, i.e.,
\begin{equation}\label{qdpro}
  f(x)=\frac{1}{2}x^TAx-b^Tx,
\end{equation}
where $A\in \mathbb{R}^{n\times n}$ is symmetric positive definite and $b\in \mathbb{R}^n$, it can be shown that the gradients will asymptotically reduce to a two-dimensional subspace spanned by the two eigenvectors corresponding to the largest and smallest eigenvalues of the matrix $A$ and hence zigzag occurs, see \cite{akaike1959successive,nocedal2002behavior} for more details. This property seriously deteriorates the performance of the SD method, especially when the condition number of $A$ is large.

An important approach that changes our perspectives on the effectiveness of gradient methods is proposed by Barzilai and Borwein \cite{Barzilai1988two}. They viewed the updating rule \eqref{eqitr} as
\begin{equation}\label{eqitr2}
  x_{k+1}=x_k-D_kg_k,
\end{equation}
where $D_k=\alpha_kI$. Similar to the quasi-Newton method \cite{dennis1977quasi}, $D_k^{-1}$ is required to satisfy the secant equation
\begin{equation}\label{qsb}
  B_ks_{k-1}=y_{k-1}
\end{equation}
to approximate the Hessian as possible as it can. Here, $s_{k-1}=x_k-x_{k-1}$ and $y_{k-1}=g_k-g_{k-1}$. However, since $D_k$ is diagonal with identical diagonal elements, it is usually impossible to find an $\alpha_k$ such that $D_k^{-1}$ fulfills \eqref{qsb} if the dimension $n>1$. Thus, Barzilai and Borwein required $D_k^{-1}$ to meet the secant equation in the sense of least squares,
\begin{equation}\label{qs1}
  D_k=\arg\min_{D=\alpha I}~\|D^{-1}s_{k-1}-y_{k-1}\|,
\end{equation}
which yields
\begin{equation}\label{bb1}
  \alpha_k^{BB1}=\frac{s_{k-1}^Ts_{k-1}}{s_{k-1}^Ty_{k-1}}. 
\end{equation}
Here and below, $\|\cdot\|$ means the Euclidean norm.
On the other hand, one can also calculate the stepsize by requiring $D_k$ to satisfy
\begin{equation}\label{qsh}
  H_ky_{k-1}=s_{k-1}.
\end{equation}
That is,
\begin{equation}\label{qs2}
  D_k=\arg\min_{D=\alpha I}~\|s_{k-1}-Dy_{k-1}\|,
\end{equation}
which gives
\begin{equation}\label{bb2}
  \alpha_k^{BB2}=\frac{s_{k-1}^Ty_{k-1}}{y_{k-1}^Ty_{k-1}}.
\end{equation}
 Apparently, when $s_{k-1}^Ty_{k-1}>0$, there holds $\alpha_{k}^{BB1}\geq\alpha_{k}^{BB2}$. In other words, $\alpha_{k}^{BB1}$ is a {\it long} stepsize while $\alpha_{k}^{BB2}$ is a {\it short} one, which implies that $\alpha_{k}^{BB1}$ is more aggressive than $\alpha_{k}^{BB2}$ in decreasing the objective value. Extensive numerical experiments show that the long stepsize is superior to the short one in many cases, see \cite{birgin2014spectral,fletcher2005barzilai,raydan1997barzilai} for example. In what follows we will refer to the gradient method with the long stepsize $\alpha_{k}^{BB1}$ as the BB method without specification.


Barzilai and Borwein \cite{Barzilai1988two} proved their method with the short BB stepsize $\alpha_{k}^{BB2}$ is $R$-superlinearly convergent for two-dimensional strictly convex quadratics.
An improved $R$-superlinear convergence result for the BB method was given by Dai \cite{dai2013new}. Global and $R$-linear convergence of the BB method for general $n$-dimensional strictly convex quadratics were established by Raydan \cite{raydan1993Barzilai} and Dai and Liao \cite{dai2002r}, respectively. The BB method has also been extended to solve general nonlinear optimization problems. By incorporating the nonmontone line search proposed by Grippo et al. \cite{grippo1986nonmonotone}, Raydan \cite{raydan1997barzilai} developed the global BB method for general unconstrained problems. Later, Birgin et al. \cite{birgin2000nonmonotone} proposed the so-called spectral projected gradient method which extends Raydan's method to smooth convex constrained problems. Dai and Fletcher \cite{dai2005projected} designed projected BB methods for large-scale box-constrained quadratic programming. Recently, by resorting to the smoothing techniques, Huang and Liu \cite{huang2016smoothing} generalized the projected BB method with modifications to solve non-Lipschitz optimization problems.

{{
The relationship between the stepsizes in BB-like methods and the spectrum of the Hessian of the objective function has been explored in several studies. Frassoldati et al. \cite{frassoldati2008new} tried to exploit the long BB stepsize close to the reciprocal of the smallest eigenvalue of the Hessian, yielding the ABBmin1 and ABBmin2 methods. De Asmundis et al. \cite{de2013spectral} developed the so-called SDA method which employs a short stepsize approximates the reciprocal of the largest eigenvalue of the Hessian. Following the line of \cite{de2013spectral}, Gonzaga and Schneider \cite{gonzaga2016steepest} suggested a monotone method for quadratics where the stepsizes are obtained in a way similar to the SD method. De Asmundis et al. \cite{de2014efficient} proposed the SDC method which exploits the spectral property of Yuan's stepsize \cite{dai2005analysis}.
Kalousek \cite{kalousek2017steepest} considered the SD method with random stepsizes lying between the reciprocal of the largest eigenvalue and the smallest eigenvalue of the Hessian and analysed the optimal distribution of random stepsizes that guarantees the maximum asymptotic convergence rate.
}}

%
%


Applications of the BB method and its variants have largely been developed for problems arising in various different areas including image restoration \cite{wang2007projected}, signal processing \cite{liu2011coordinated}, eigenvalue problems \cite{jiang2013feasible}, nonnegative matrix factorization \cite{huang2015quadratic}, sparse reconstruction \cite{wright2009sparse}, machine learning \cite{tan2016barzilai}, etc. We refer the reader to
\cite{birgin2014spectral,dai2003alternate,dai2016barzilai,di2018steplength,fletcher2005barzilai,yuan2008step} and references therein for more spectral gradient methods and extensions.

The success of the BB method and its variants motivates us to consider spectral gradient methods. Our goal is to present a family of spectral gradient methods for optimization. Notice that the Broyden class of quasi-Newton methods \cite{broyden1965class} approximate the inverse of the Hessian  by
\begin{equation}\label{Broydenh}
  H_{k}^\tau=\tau H_k^{BFGS} + (1-\tau)H_k^{DFP},
\end{equation}
where $\tau\in[0,1]$ is a scalar parameter and $H_k^{BFGS}$ and $H_k^{DFP}$ are the BFGS and DFP matrices, respectively, that satisfy the secant equation \eqref{qsh}, which further implies that
\begin{equation*}
\tau H_k^{BFGS}y_{k-1} + (1-\tau)H_k^{DFP}y_{k-1}=s_{k-1},
\end{equation*}
i.e.,
\begin{equation}\label{Broydenm}
\tau(H_k^{BFGS}y_{k-1}-s_{k-1}) + (1-\tau)(H_k^{DFP}y_{k-1}-s_{k-1})=0.
\end{equation}
Since the inverse of $H_k^{BFGS}$, say $B_k^{BFGS}$, satisfies \eqref{qsb}, we can modify \eqref{Broydenm} as
\begin{equation}\label{Broydenm2}
\tau(B_k^{BFGS}s_{k-1}-y_{k-1}) + (1-\tau)(s_{k-1}-H_k^{DFP}y_{k-1})=0.
\end{equation}
Motivated by the above observation, we employ the idea of the BB method to approximate the Hessian  and its inverse by diagonal matrices. Particularly, we require the matrix $D=\alpha I$ to be the solution of
 \begin{equation}\label{qsstep1}
  \min_{D=\alpha I}~\|\tau(D^{-1}s_{k-1}-y_{k-1})+(1-\tau)(s_{k-1}-D y_{k-1})\|.
\end{equation}
In the next section, we will show that the stepsize given by the convex combination of the long BB stepsize $\alpha_k^{BB1}$ and the short BB stepsize $\alpha_k^{BB2}$, i.e.,
\begin{equation}\label{bbc}
  \alpha_k=\gamma_k \alpha_{k}^{BB1}+(1-\gamma_k)\alpha_{k}^{BB2},
\end{equation}
where $\gamma_k\in[0,1]$, is a solution to \eqref{qsstep1}. Clearly, this is a one-parametric family of stepsizes, which include the two BB stepsizes as special instances. Moreover, any stepsize lies in the interval $[\alpha_{k}^{BB2},\alpha_{k}^{BB1}]$ is a special case of the family. For example, the positive stepsize given by the geometrical mean of $\alpha_{k}^{BB1}$ and $\alpha_{k}^{BB2}$ \cite{dai2015positive},
\begin{equation}\label{ap}
  \alpha_k^{P}=\sqrt{\alpha_{k}^{BB1}\alpha_{k}^{BB2}}=\frac{\|s_{k-1}\|}{\|y_{k-1}\|}.
\end{equation}
We further prove that the family of spectral gradient methods \eqref{bbc} is $R$-superlinearly convergent for two-dimensional strictly convex quadratics. For the $n$-dimensional case, the family is proved to be $R$-linearly convergent. Numerical results of the family \eqref{bbc} with different settings of $\gamma_k$
{{are presented and compared with other gradient methods, including the BB  method \cite{Barzilai1988two}, the alternate BB method (ALBB) \cite{dai2005projected}, the adaptive BB method (ABB) \cite{zhou2006gradient}, the cyclic BB method with stepsize $\alpha_k^{BB1}$ (CBB1) \cite{dai2006cyclic}, the cyclic BB method with stepsize $\alpha_k^{BB2}$ (CBB2), the cyclic method with stepsize $\alpha_k^{P}$ (CP), the Dai-Yuan method (DY) \cite{dai2005analysis}, the ABBmin1 and ABBmin2 methods \cite{frassoldati2008new}, and the SDC method \cite{de2014efficient}. The comparisons demonstrate that the proposed family is promising.
}}

The paper is organized as follows. In Section 2, we show that each stepsize in the family \eqref{bbc} solves some least squares problem \eqref{qsstep1} and hence possesses certain quasi-Newton property. In Section 3, we establish $R$-superlinear convergence of the family \eqref{bbc} for two-dimensional strictly convex quadratics and $R$-linear convergence for the $n$-dimensional case, respectively. In Section 4, we discuss different selection rules for the parameter $\gamma_k$. In Section 5, we conduct some numerical comparisons of our approach and {{other gradient methods}}. Finally, some conclusions are drawn in Section 6.

\section{Quasi-Newton property of the family \eqref{bbc}}

In this section, we show that each stepsize in the family \eqref{bbc} enjoys certain quasi-Newton property.


For the sake of simplicity, we discard the subscript of $s_{k-1}$ and $y_{k-1}$ in the following part of this section, i.e., $s=s_{k-1}$, $y=y_{k-1}$. Let
\begin{equation*}
  \phi_\tau(\alpha):=\|\tau(\frac{1}{\alpha}s-y)+(1-\tau)(s-\alpha y)\|^2.
\end{equation*}
Then, the derivative of $\phi_\tau(\alpha)$ with respect to $\alpha$ is
\begin{align*}
  \phi_\tau'(\alpha)
  &=2[\tau+(1-\tau)\alpha]\{(-\frac{\tau}{\alpha^3})s^Ts
-[(1-\tau)\frac{1}{\alpha}
  +(-\frac{\tau}{\alpha^2})]s^Ty+(1-\tau)y^Ty\}.
\end{align*}

\begin{proposition}\label{pro1}
  {{If $s^Ty>0$ and $\tau\in[0,1]$}}, the equation $\phi'_\tau(\alpha)=0$ has a {{unique}} root in $[\alpha_k^{BB2},\alpha_k^{BB1}]$.
\end{proposition}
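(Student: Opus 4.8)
The plan is to eliminate the denominators so that, on the interval of interest, $\phi_\tau'(\alpha)=0$ becomes a cubic polynomial equation, then to establish existence of a root by the intermediate value theorem and uniqueness by a monotonicity argument.

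To set up, note that since $s^Ty>0$ the Cauchy--Schwarz inequality gives $0<\alpha_k^{BB2}\le\alpha_k^{BB1}$, so $[\alpha_k^{BB2},\alpha_k^{BB1}]\subset(0,\infty)$, and on this interval the factor $\tau+(1-\tau)\alpha$ in $\phi_\tau'$ is strictly positive for every $\tau\in[0,1]$. Hence there $\phi_\tau'(\alpha)=0$ is equivalent to the vanishing of the bracketed term, and multiplying that term by $\alpha^3>0$ reduces the problem to showing that the cubic
\begin{equation*}
h(\alpha):=(1-\tau)(y^Ty)\,\alpha^3-(1-\tau)(s^Ty)\,\alpha^2+\tau(s^Ty)\,\alpha-\tau(s^Ts)
\end{equation*}
has a unique zero in $[\alpha_k^{BB2},\alpha_k^{BB1}]$.

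The first key step is to evaluate $h$ at the endpoints. Substituting $\alpha_k^{BB2}=(s^Ty)/(y^Ty)$, the cubic and quadratic terms cancel, leaving $h(\alpha_k^{BB2})=\tau\big((s^Ty)^2/(y^Ty)-s^Ts\big)\le 0$ by Cauchy--Schwarz; substituting $\alpha_k^{BB1}=(s^Ts)/(s^Ty)$, the linear and constant terms cancel, leaving $h(\alpha_k^{BB1})=(1-\tau)\tfrac{(s^Ts)^2}{s^Ty}\big(\tfrac{(s^Ts)(y^Ty)}{(s^Ty)^2}-1\big)\ge 0$, again by Cauchy--Schwarz. Continuity of $h$ and the intermediate value theorem then give a root in $[\alpha_k^{BB2},\alpha_k^{BB1}]$.

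The second key step, and the one I expect to be the real obstacle, is uniqueness: a cubic may a priori have three real roots, and the bracketed factor is not monotone on all of $(0,\infty)$. The device I would use is the splitting $h=(1-\tau)p+\tau q$ with $p(\alpha)=\alpha^2\big((y^Ty)\alpha-s^Ty\big)$ and $q(\alpha)=(s^Ty)\alpha-s^Ts$. Here $q'(\alpha)=s^Ty>0$ everywhere, while $p'(\alpha)=\alpha\big(3(y^Ty)\alpha-2s^Ty\big)>0$ once $\alpha\ge\alpha_k^{BB2}$ (since then $3(y^Ty)\alpha-2s^Ty\ge s^Ty>0$). Thus on $[\alpha_k^{BB2},\alpha_k^{BB1}]$ the function $h$ is a combination, with nonnegative coefficients not both zero, of two strictly increasing functions, hence strictly increasing; together with $h(\alpha_k^{BB2})\le 0\le h(\alpha_k^{BB1})$ this forces exactly one root. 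The borderline case $\alpha_k^{BB2}=\alpha_k^{BB1}$, which occurs precisely when $s$ and $y$ are parallel, is trivial since the interval is then a single point at which $h$ already vanishes. It is worth noting that it is the same sign changes of $p$ and of $q$, at $\alpha_k^{BB2}$ and at $\alpha_k^{BB1}$ respectively, that simultaneously drive the endpoint sign pattern used for existence and the monotonicity used for uniqueness.
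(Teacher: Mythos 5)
Your proof is correct, and its skeleton coincides with the paper's: you clear the positive prefactor $2[\tau+(1-\tau)\alpha]/\alpha^3$ and reduce to the same cubic (your $h(\alpha)$ is exactly the paper's $\psi(\tau,\alpha)=(1-\tau)y^Ty(\alpha^3-\alpha^2\alpha_k^{BB2})+\tau s^Ty(\alpha-\alpha_k^{BB1})$), and existence is obtained in both cases from the endpoint signs at $\alpha_k^{BB2}$, $\alpha_k^{BB1}$ plus the intermediate value theorem. Where you genuinely diverge is uniqueness: the paper assumes two roots $\alpha_1<\alpha_2$ in the interval, subtracts the two equations and factors the difference, arriving at $(1-\tau)y^Ty[(\alpha_1^2+\alpha_1\alpha_2+\alpha_2^2)-(\alpha_1+\alpha_2)\alpha_k^{BB2}]=-\tau s^Ty$, whose left side is nonnegative when $\alpha_k^{BB2}\leq\alpha_1<\alpha_2$ — in effect a divided-difference positivity argument. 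You instead prove strict monotonicity of $h$ on $[\alpha_k^{BB2},\alpha_k^{BB1}]$ directly, by splitting $h=(1-\tau)p+\tau q$ with $p(\alpha)=\alpha^2(y^Ty\,\alpha-s^Ty)$ and $q(\alpha)=s^Ty\,\alpha-s^Ts$ and checking $p'>0$, $q'>0$ there. The two mechanisms are equivalent in substance, but yours has two small advantages: it treats $\tau\in\{0,1\}$ by the same argument (the paper sets these cases aside with ``we only need to consider $\tau\in(0,1)$''), and you explicitly dispose of the degenerate case $\alpha_k^{BB2}=\alpha_k^{BB1}$ (equality in Cauchy--Schwarz), which the paper's strict inequalities $\psi(\tau,\alpha_k^{BB2})<0<\psi(\tau,\alpha_k^{BB1})$ tacitly assume away; the paper's argument, on the other hand, stays purely algebraic and needs no differentiation of $h$.
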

\begin{proof}
We only need to consider the case $\tau\in(0,1)$. Notice that
\begin{align}\label{eqdefh}
\psi(\tau,\alpha):& =\frac{1}{2}\frac{\alpha^3}{\tau+(1-\tau)\alpha}\phi_\tau'(\alpha) \nonumber\\
  &=-\tau s^Ts-[(1-\tau)\alpha^2-\tau\alpha]s^Ty+(1-\tau)\alpha^3y^Ty\nonumber\\
   &=(1-\tau)(\alpha^3y^Ty-\alpha^2s^Ty)+\tau(\alpha s^Ty-s^Ts)\nonumber\\
   &=(1-\tau)y^Ty(\alpha^3-\alpha^2\alpha_k^{BB2})+\tau s^Ty(\alpha-\alpha_k^{BB1}).
\end{align}
{{If $s^Ty>0$, we have $y\ne 0$ and $y^Ty>0$. This implies that}} $\psi(\tau,\alpha_k^{BB2})<0$ and $\psi(\tau,\alpha_k^{BB1})>0$. Thus, $\psi(\tau,\alpha)=0$ has a root in $(\alpha_k^{BB2},\alpha_k^{BB1})$. Since $\alpha>0$, we {{know that the equation $\phi'_\tau(\alpha)=0$ has a root in $[\alpha_k^{BB2},\alpha_k^{BB1}]$.}}

{{Now we show the uniqueness of such a root by contradiction. Suppose that $\alpha_1<\alpha_2$ and $\alpha_1,\alpha_2\in[\alpha_k^{BB2},\alpha_k^{BB1}]$ such that $\phi'_\tau(\alpha_1)=0$ and $\phi'_\tau(\alpha_2)=0$. It follows from \eqref{eqdefh} that
\begin{align*}
 & (1-\tau)y^Ty(\alpha_1^3-\alpha_1^2\alpha_k^{BB2})+\tau s^Ty(\alpha_1-\alpha_k^{BB1}) \\
  =&(1-\tau)y^Ty(\alpha_2^3-\alpha_2^2\alpha_k^{BB2})+\tau s^Ty(\alpha_2-\alpha_k^{BB1}).
\end{align*}
By rearranging terms, we obtain
\begin{align*}
 & (1-\tau)y^Ty[(\alpha_1-\alpha_2)(\alpha_1^2+\alpha_1\alpha_2+\alpha_2^2)
 -(\alpha_1-\alpha_2)(\alpha_1+\alpha_2)\alpha_k^{BB2}]\\
 & =\tau s^Ty(\alpha_2-\alpha_1).
\end{align*}
Since $\alpha_1\ne \alpha_2$, it follows that
\begin{equation*}\label{eqpro1}
  (1-\tau)y^Ty[(\alpha_1^2+\alpha_1\alpha_2+\alpha_2^2)
 -(\alpha_1+\alpha_2)\alpha_k^{BB2}]=-\tau s^Ty,
\end{equation*}
which gives $(\alpha_1^2+\alpha_1\alpha_2+\alpha_2^2)
 -(\alpha_1+\alpha_2)\alpha_k^{BB2}<0$. This is not possible since $\alpha_k^{BB2}\leq\alpha_1<\alpha_2$. This completes the proof.
}}
\qed
\end{proof}

\begin{proposition}\label{pro2}
 {{If $s^Ty>0$ and $\tau\in[0,1]$}},
 the root of $\phi'_\tau(\alpha)=0$ in $[\alpha_k^{BB2},\alpha_k^{BB1}]$ is monotone with respect to $\tau$.
\end{proposition}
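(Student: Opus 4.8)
The plan is to treat the unique root $\alpha(\tau)$ of $\psi(\tau,\alpha)=0$ in $[\alpha_k^{BB2},\alpha_k^{BB1}]$ (which exists by Proposition~\ref{pro1}) as an implicit function of $\tau$ and to show that $\psi$ is smooth enough, and has a nonvanishing partial derivative $\partial_\alpha\psi$ at the root, so that the implicit function theorem applies and $\alpha'(\tau)$ has a constant sign. Concretely, I would first observe from \eqref{eqdefh} that $\psi$ is a polynomial in $\alpha$ and affine in $\tau$, hence $C^\infty$ on the relevant region; then I would compute the two partial derivatives
\begin{align*}
 \partial_\alpha\psi(\tau,\alpha) &= (1-\tau)y^Ty(3\alpha^2-2\alpha\,\alpha_k^{BB2})+\tau\, s^Ty,\\
 \partial_\tau\psi(\tau,\alpha) &= -y^Ty(\alpha^3-\alpha^2\alpha_k^{BB2})+s^Ty(\alpha-\alpha_k^{BB1})
 = -y^Ty\alpha^2(\alpha-\alpha_k^{BB2})+s^Ty(\alpha-\alpha_k^{BB1}).
\end{align*}

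The key point is that at a root $\alpha\in[\alpha_k^{BB2},\alpha_k^{BB1}]$ we can sign both quantities. For $\partial_\alpha\psi$: since $\alpha\geq\alpha_k^{BB2}>0$ we have $3\alpha^2-2\alpha\alpha_k^{BB2}=\alpha(3\alpha-2\alpha_k^{BB2})\geq\alpha(3\alpha_k^{BB2}-2\alpha_k^{BB2})=\alpha\alpha_k^{BB2}>0$, and with $\tau\in(0,1)$, $y^Ty>0$, $s^Ty>0$ this forces $\partial_\alpha\psi(\tau,\alpha)>0$; this is essentially the same monotonicity-in-$\alpha$ computation already used implicitly for uniqueness in Proposition~\ref{pro1}. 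For $\partial_\tau\psi$: at the root $\alpha\in[\alpha_k^{BB2},\alpha_k^{BB1}]$ the first term $-y^Ty\alpha^2(\alpha-\alpha_k^{BB2})$ is $\le 0$ and the second term $s^Ty(\alpha-\alpha_k^{BB1})$ is $\le 0$, so $\partial_\tau\psi(\tau,\alpha)\le 0$. Hence by the implicit function theorem $\alpha$ is differentiable on $(0,1)$ with
\[
 \alpha'(\tau) = -\frac{\partial_\tau\psi(\tau,\alpha(\tau))}{\partial_\alpha\psi(\tau,\alpha(\tau))} \ge 0,
\]
so $\alpha(\tau)$ is nondecreasing; continuity at the endpoints $\tau=0,1$ (where the root is $\alpha_k^{BB2}$ and $\alpha_k^{BB1}$ respectively) extends monotonicity to all of $[0,1]$. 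I would also note the degenerate case $\alpha_k^{BB1}=\alpha_k^{BB2}$ is trivial since the root is constant.

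The main obstacle, and the only place needing care, is verifying that $\partial_\alpha\psi$ does not vanish at the root so the implicit function theorem genuinely applies — but as shown above this reduces to the elementary inequality $\alpha(3\alpha-2\alpha_k^{BB2})>0$ on $[\alpha_k^{BB2},\alpha_k^{BB1}]$, together with positivity of $y^Ty$ and $s^Ty$; everything else (smoothness, signing $\partial_\tau\psi$, passing to the closed interval by continuity) is routine. An alternative, fully elementary route that avoids the implicit function theorem altogether is to take two values $\tau_1<\tau_2$ with roots $\alpha_1,\alpha_2$, subtract the identities $\psi(\tau_1,\alpha_1)=0$ and $\psi(\tau_2,\alpha_2)=0$, and manipulate as in the uniqueness argument of Proposition~\ref{pro1}; I would present the implicit-function version as the cleaner one but mention this discrete alternative as a remark.
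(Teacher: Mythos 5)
Your proposal is correct and follows essentially the same route as the paper: the paper also differentiates the identity $\psi(\tau,\alpha(\tau))=0$ implicitly and obtains exactly your quotient $\alpha'=-\partial_\tau\psi/\partial_\alpha\psi$, whose numerator and denominator it signs on $(\alpha_k^{BB2},\alpha_k^{BB1})$ to conclude $\alpha'>0$. Your only addition is to state the implicit function theorem hypotheses explicitly (in particular that $\partial_\alpha\psi>0$ at the root), which the paper leaves implicit; this is a welcome but minor refinement rather than a different argument.
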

\begin{proof}
It suffices to show the statement holds for $\tau\in(0,1)$. By the proof of Proposition \ref{pro1}, $\alpha$ is an implicit function of $\tau$ determined by the equation $\psi(\tau,\alpha)=0$. The derivative of $\psi(\tau,\alpha)$ with respect to $\tau$ is
\begin{align*}
  \frac{\partial \psi(\tau,\alpha)}{\partial \tau}=-y^Ty(\alpha^3-\alpha^2\alpha_k^{BB2})
&+
  (1-\tau)y^Ty(3\alpha^2\cdot\alpha'-2\alpha_k^{BB2}\alpha\cdot\alpha')\\
  &+
  s^Ty(\alpha-\alpha_k^{BB1})+
  \tau s^Ty\alpha'.
\end{align*}
Let $\frac{\partial \psi(\tau,\alpha)}{\partial \tau}=0$. By simple calculations, we obtain
\begin{equation*}
  \alpha'=\frac{y^Ty(\alpha^3-\alpha^2\alpha_k^{BB2})-s^Ty(\alpha-\alpha_k^{BB1})}
  {(1-\tau)y^Ty(3\alpha^2-2\alpha_k^{BB2}\alpha)+
  \tau s^Ty}.
\end{equation*}
For $\alpha\in(\alpha_k^{BB2},\alpha_k^{BB1})$, $\alpha'>0$. This completes the proof.
\qed
\end{proof}

\begin{theorem}\label{th1}
For each $\gamma_k\in[0,1]$, the stepsize $\alpha_k$ defined by \eqref{bbc} is a solution of \eqref{qsstep1}.
\end{theorem}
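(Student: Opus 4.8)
The plan is to read \eqref{qsstep1}, with $D=\alpha I$, as the one–variable minimization $\min_{\alpha>0}\phi_\tau(\alpha)$; restricting to $\alpha>0$ is the natural convention here, since the stepsize (like the Hessian it models) is positive, and it is in fact indispensable, because $\phi_\tau$ vanishes identically at the negative value $\alpha=-\tau/(1-\tau)$ and so its unconstrained infimum over $\alpha\neq 0$ is not informative. Read in the spirit of the Broyden-class analogy, the statement asserts that for every $\gamma_k\in[0,1]$ there is a parameter $\tau\in[0,1]$ for which the combined stepsize \eqref{bbc} is the global minimizer of $\phi_\tau$ over $\alpha>0$. So I would split the argument into two parts: (i) for each fixed $\tau$, identify the global minimizer of $\phi_\tau$ on $(0,\infty)$; (ii) show that as $\tau$ ranges over $[0,1]$ these minimizers sweep out the whole segment $[\alpha_k^{BB2},\alpha_k^{BB1}]$, which is exactly the set $\{\gamma_k\alpha_k^{BB1}+(1-\gamma_k)\alpha_k^{BB2}:\gamma_k\in[0,1]\}$.

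For part (i), fix $\tau\in(0,1)$; the endpoint cases are immediate, since $\phi_0(\alpha)=\|s-\alpha y\|^2$ is the convex quadratic of \eqref{qs2}, minimized at $\alpha_k^{BB2}$, and $\phi_1(\alpha)=\|\alpha^{-1}s-y\|^2$ is \eqref{qs1}, minimized at $\alpha_k^{BB1}$. Using $\phi_\tau'(\alpha)=\frac{2(\tau+(1-\tau)\alpha)}{\alpha^3}\,\psi(\tau,\alpha)$ and noting $\tau+(1-\tau)\alpha>0$ for $\alpha>0$, the sign of $\phi_\tau'$ on $(0,\infty)$ coincides with the sign of $\psi(\tau,\alpha)=(1-\tau)y^Ty\,\alpha^2(\alpha-\alpha_k^{BB2})+\tau s^Ty\,(\alpha-\alpha_k^{BB1})$ from \eqref{eqdefh}. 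Both summands are negative on $(0,\alpha_k^{BB2})$ and both positive on $(\alpha_k^{BB1},\infty)$, so $\phi_\tau$ is strictly decreasing on $(0,\alpha_k^{BB2}]$ and strictly increasing on $[\alpha_k^{BB1},\infty)$; hence every stationary point of $\phi_\tau$ on $(0,\infty)$ lies in $(\alpha_k^{BB2},\alpha_k^{BB1})$, and by Proposition \ref{pro1} there is exactly one, say $\alpha^\ast(\tau)$. Therefore $\phi_\tau$ decreases on $(0,\alpha^\ast(\tau))$ and increases on $(\alpha^\ast(\tau),\infty)$, so $\alpha^\ast(\tau)$ is the unique global minimizer of $\phi_\tau$ over $\alpha>0$. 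This also subsumes the degenerate case $s\parallel y$: then $\alpha_k^{BB1}=\alpha_k^{BB2}$, the segment is a single point, and that point minimizes $\phi_\tau$ (indeed with value $0$) for every $\tau$.

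For part (ii), part (i) gives $\alpha^\ast(0)=\alpha_k^{BB2}$ and $\alpha^\ast(1)=\alpha_k^{BB1}$, while for any $\bar\alpha\in(\alpha_k^{BB2},\alpha_k^{BB1})$ the equation $\psi(\tau,\bar\alpha)=0$ is affine in $\tau$ with $\psi(0,\bar\alpha)=y^Ty\,\bar\alpha^2(\bar\alpha-\alpha_k^{BB2})>0$ and $\psi(1,\bar\alpha)=s^Ty\,(\bar\alpha-\alpha_k^{BB1})<0$, hence has a unique root $\bar\tau\in(0,1)$; then $\phi_{\bar\tau}'(\bar\alpha)=0$, and since $\bar\alpha\in(\alpha_k^{BB2},\alpha_k^{BB1})$ part (i) forces $\bar\alpha=\alpha^\ast(\bar\tau)$, the minimizer of $\phi_{\bar\tau}$. (Equivalently one may invoke Proposition \ref{pro2}: the map $\tau\mapsto\alpha^\ast(\tau)$ is monotone and continuous — continuity following from $\partial\psi/\partial\alpha\neq0$ at $\alpha^\ast(\tau)$, which is the nonvanishing denominator in the formula for $\alpha'$ in Proposition \ref{pro2} — so by the intermediate value theorem its image is all of $[\alpha_k^{BB2},\alpha_k^{BB1}]$.) Since any $\alpha_k$ of the form \eqref{bbc} lies in $[\alpha_k^{BB2},\alpha_k^{BB1}]$, it equals $\alpha^\ast(\tau)$ for a suitable $\tau\in[0,1]$ and hence solves \eqref{qsstep1}.

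The main obstacle is part (i). Proposition \ref{pro1} only locates a unique stationary point \emph{inside} $[\alpha_k^{BB2},\alpha_k^{BB1}]$, so the genuine work is to promote it to a global minimizer on all of $(0,\infty)$: ruling out stationary points outside the segment and establishing the monotonicity of $\phi_\tau$ there, via the definite sign of each summand of $\psi$ in \eqref{eqdefh}, and, relatedly, fixing the convention $\alpha>0$ so that the spurious zero of $\phi_\tau$ at $\alpha=-\tau/(1-\tau)$ is excluded. Part (ii) is then a short intermediate-value argument in the parameter $\tau$, and the rest is bookkeeping with the explicit expression \eqref{eqdefh}.
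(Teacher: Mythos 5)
Your proposal is correct, and its core coincides with the paper's: both reduce the claim to finding, for a given $\alpha_k\in[\alpha_k^{BB2},\alpha_k^{BB1}]$, a parameter $\tau$ with $\psi(\tau,\alpha_k)=0$, i.e.\ $\phi_\tau'(\alpha_k)=0$. The difference is in how much is made explicit. The paper simply substitutes $\alpha_k=\gamma_k\alpha_k^{BB1}+(1-\gamma_k)\alpha_k^{BB2}$ into $\psi$ and exhibits the closed-form root $\tilde\tau=\gamma_k\alpha_k^2/\bigl(\gamma_k\alpha_k^2+(1-\gamma_k)\alpha_k^{BB2}\bigr)$, stopping at stationarity and leaving implicit (via Proposition \ref{pro1}) that this stationary point actually solves \eqref{qsstep1}; you instead locate $\bar\tau$ by an intermediate-value argument on the affine map $\tau\mapsto\psi(\tau,\bar\alpha)$, which is equivalent but less explicit. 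What your write-up buys is the missing optimality step: the sign analysis of the two summands of $\psi$ outside $[\alpha_k^{BB2},\alpha_k^{BB1}]$ shows $\phi_\tau$ decreases before and increases after its unique interior stationary point, so that point is the global minimizer over $\alpha>0$; you also make explicit the convention $\alpha>0$ (needed because $\phi_\tau$ vanishes at the negative value $\alpha=-\tau/(1-\tau)$, so the unrestricted problem is degenerate) and the case $s\parallel y$, and you handle the endpoints $\tau=0,1$ as the classical BB least-squares problems. These additions are correct and make the argument self-contained where the paper relies on the reader to supply them; if you prefer the paper's economy, you could keep your part (i) and replace the IVT step by the paper's explicit formula for $\tilde\tau$.
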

\begin{proof}
We only need to show that, for $\gamma_k\in(0,1)$, $\phi'_{\tau}(\alpha_k)$ vanishes at some $\tilde{\tau}\in(0,1)$. From \eqref{eqdefh} and \eqref{bbc}, we have
\begin{align*}
\psi(\tau,\alpha_k)
   &=(1-\tau)\alpha_k^2y^Ty(\alpha_k-\alpha_k^{BB2})+\tau s^Ty(\alpha_k-\alpha_k^{BB1})\\
   &=(1-\tau)\gamma_k\alpha_k^2y^Ty(\alpha_k^{BB1}-\alpha_k^{BB2})+\tau(\gamma_k-1) s^Ty(\alpha_k^{BB1}-\alpha_k^{BB2})\\
   &=y^Ty[(1-\tau)\gamma_k\alpha_k^2+\tau(\gamma_k-1)\alpha_k^{BB2}](\alpha_k^{BB1}-\alpha_k^{BB2}).
\end{align*}
Clearly,
\begin{equation*}
  \tilde{\tau}=\frac{\gamma_k\alpha_k^2}{\gamma_k\alpha_k^2+(1-\gamma_k)\alpha_k^{BB2}}\in(0,1)
\end{equation*}
is a root of $\psi(\tau,\alpha_k)=0$. This completes the proof.
\qed
\end{proof}

\section{Convergence analysis}
{{In this section, we analyze the convergence properties of the family \eqref{bbc} for the quadratic function \eqref{qdpro}.}}
Since the gradient method \eqref{eqitr} is invariant under translations and rotations when applying to problem \eqref{qdpro}, we assume that the matrix $A$ is diagonal, i.e.,
\begin{equation}\label{formA}
  A=\textrm{diag}\{\lambda_1,\lambda_2,\cdots,\lambda_n\},
\end{equation}
where $0<\lambda_1\leq\lambda_2\leq\cdots\leq\lambda_n$.

\subsection{Two-dimensional case}
In this subsection, based on the techniques in \cite{dai2015positive}, we establish the $R$-superlinear convergence of the family \eqref{bbc} for two-dimensional quadratic functions.

Without loss of generality, we assume that
\begin{equation*}
  A=\left(
      \begin{array}{cc}
        1 & ~0 \\
        0 & ~\lambda \\
      \end{array}
    \right),~~b=0,
\end{equation*}
where $\lambda\geq1$. Furthermore, assume that $x_1$ and $x_2$ are such that
\begin{equation}\label{initialg}
  g_1^{(i)}\neq0,~~g_2^{(i)}\neq0,~~i=1,2.
\end{equation}
Let
\begin{equation*}
  q_k=\frac{(g_k^{(1)})^2}{(g_k^{(2)})^2}.
\end{equation*}
Then it follows that
\begin{equation*}
  \alpha_{k}^{BB1}
  =\frac{g_{k-1}^Tg_{k-1}}{g_{k-1}^TAg_{k-1}}
  =\frac{1+q_{k-1}}{\lambda+q_{k-1}},
\end{equation*}
\begin{equation*}
  \alpha_{k}^{BB2}
  =\frac{g_{k-1}^TAg_{k-1}}{g_{k-1}^TA^2g_{k-1}}
  =\frac{\lambda+q_{k-1}}{\lambda^2+q_{k-1}}.
\end{equation*}
Thus, the stepsize \eqref{bbc} can be written as
\begin{align}\label{bbc2}
  \alpha_k&=\gamma_k\frac{1+q_{k-1}}{\lambda+q_{k-1}}+
  (1-\gamma_k)\frac{\lambda+q_{k-1}}{\lambda^2+q_{k-1}}\nonumber\\
  &=\frac{\gamma_k(1+q_{k-1})(\lambda^2+q_{k-1})+
  (1-\gamma_k)(\lambda+q_{k-1})^2}{(\lambda+q_{k-1})(\lambda^2+q_{k-1})}.
\end{align}
By the update rule \eqref{eqitr} and $g_k=Ax_k$, we have
\begin{equation*}
  g_{k+1}=(I-\alpha_kA)g_k.
\end{equation*}
Thus,
\begin{align}\label{eqg1}
  (g_{k+1}^{(1)})^2&=(1-\alpha_k)^2(g_k^{(1)})^2\nonumber\\
  &=\frac{(\lambda-1)^2\left[\gamma_k(\lambda^2+q_{k-1})+
  (1-\gamma_k)\lambda(\lambda+q_{k-1})\right]^2}{(\lambda+q_{k-1})^2(\lambda^2+q_{k-1})^2}
  (g_k^{(1)})^2,
\end{align}
\begin{align}\label{eqg2}
  (g_{k+1}^{(2)})^2&=(1-\lambda\alpha_k)^2(g_k^{(2)})^2\nonumber\\
  &=\frac{q_{k-1}^2(1-\lambda)^2  \left[\gamma_k(\lambda^2+q_{k-1})+
  (1-\gamma_k)(\lambda+q_{k-1})\right]^2}{(\lambda+q_{k-1})^2(\lambda^2+q_{k-1})^2}
  (g_k^{(2)})^2.
\end{align}
From \eqref{eqg1}, \eqref{eqg2} and the definition of $q_k$, we get
\begin{equation}\label{qk1}
  q_{k+1}=\frac{(g_{k+1}^{(1)})^2}{(g_{k+1}^{(2)})^2}
  =\left(\frac{\gamma_k(\lambda^2+q_{k-1})+
  (1-\gamma_k)\lambda(\lambda+q_{k-1})}{\gamma_k(\lambda^2+q_{k-1})+
  (1-\gamma_k)(\lambda+q_{k-1})}\right)^2\frac{q_k}{q_{k-1}^2}.
\end{equation}

Let
\begin{equation*}
  h_k(w)=\frac{\gamma_k(\lambda^2+w)+
  (1-\gamma_k)\lambda(\lambda+w)}{\gamma_k(\lambda^2+w)+
  (1-\gamma_k)(\lambda+w)}.
\end{equation*}
Then we have
\begin{equation*}
  h_k(0)=\frac{\gamma_k\lambda^2+
  (1-\gamma_k)\lambda^2}{\gamma_k\lambda^2+
  (1-\gamma_k)\lambda}=\frac{\lambda}{\gamma_k\lambda+1-\gamma_k},
\end{equation*}
\begin{equation*}
  \lim_{w\rightarrow\infty}h_k(w)=\frac{\gamma_k+
  (1-\gamma_k)\lambda}{\gamma_k+
  1-\gamma_k}=\gamma_k+(1-\gamma_k)\lambda.
\end{equation*}
Since
\begin{equation}\label{dhw}
  h'_k(w)=
  \frac{\gamma_k(1-\gamma_k)\lambda(\lambda-1)^2}{(\gamma_k(\lambda^2+w)+
  (1-\gamma_k)(\lambda+w))^2},
\end{equation}
we obtain $h'(w)>0$ for $\gamma_k\in(0,1)$. Thus,
\begin{equation}\label{hwbd}
 h_k(w)\in\left(\frac{\lambda}{\gamma_k\lambda+1-\gamma_k},\gamma_k+(1-\gamma_k)\lambda\right).
\end{equation}
Denoting $M_k=\log q_k$. By \eqref{qk1}, we have
\begin{equation}\label{eqmrec}
  M_{k+1}=M_k-2M_{k-1}+2\log h_k(q_{k-1}).
\end{equation}
Let $\theta$ such that $\theta^2-\theta+2=0$.
Then, $\theta=\frac{1\pm\sqrt{7}i}{2}$. Denote by
\begin{equation}\label{defkesi}
  \xi_k=M_k+(\theta-1)M_{k-1}.
\end{equation}
We have the following result.
\begin{lemma}\label{lm1}
  If $\gamma_k\in(0,1)$ and
\begin{equation}\label{eqkesi2}
  |\xi_2|>8\log \lambda,
\end{equation}
there exists $c_1>0$ such that
\begin{equation}\label{eqbdkesi}
  |\xi_k|\geq (\sqrt{2}-1)2^{\frac{k}{2}}c_1,~~k\geq2.
\end{equation}
\end{lemma}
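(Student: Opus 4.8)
The idea is to collapse the second-order recursion \eqref{eqmrec} for $M_k$ into a first-order one for $\xi_k$, and then recognize the outcome as a geometrically growing sequence perturbed by a uniformly bounded term. First I would record the elementary facts about $\theta$: since $\theta^2-\theta+2=0$ has negative discriminant, its roots are the complex conjugate pair $\theta,\bar\theta$, so comparing coefficients gives $\theta+\bar\theta=1$ and $\theta\bar\theta=2$; in particular $|\theta|=\sqrt2$, $\theta-1=-\bar\theta$, and $\theta^2-\theta=-2$. Substituting \eqref{eqmrec} into the definition \eqref{defkesi} gives
\[
\xi_{k+1}=M_{k+1}+(\theta-1)M_k=\theta M_k-2M_{k-1}+2\log h_k(q_{k-1}),
\]
and since $\theta\xi_k=\theta M_k+(\theta^2-\theta)M_{k-1}=\theta M_k-2M_{k-1}$, this telescopes to the clean recursion
\[
\xi_{k+1}=\theta\,\xi_k+2\log h_k(q_{k-1}),\qquad k\ge2 .
\]

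Next I would bound the perturbation. By \eqref{hwbd}, for $\gamma_k\in(0,1)$ the quantity $h_k(q_{k-1})$ lies strictly between $\frac{\lambda}{\gamma_k\lambda+1-\gamma_k}$ and $\gamma_k+(1-\gamma_k)\lambda$, and a one-line check shows that for $\lambda\ge1$ both of these numbers lie in $[1,\lambda]$; hence $0\le\log h_k(q_{k-1})<\log\lambda$ and therefore $|2\log h_k(q_{k-1})|\le 2\log\lambda$. (At this point I would also note that \eqref{initialg} together with \eqref{eqg1}--\eqref{eqg2} forces $g_k^{(i)}\ne0$ for all $k$ by an easy induction, so that $q_k>0$, $M_k=\log q_k$ and the argument of $h_k$ are well defined throughout.) Taking absolute values in the first-order recursion and using the reverse triangle inequality with $|\theta|=\sqrt2$ yields
\[
|\xi_{k+1}|\ge\sqrt2\,|\xi_k|-2\log\lambda,\qquad k\ge2 .
\]

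The last step is the routine ``geometric escape from a fixed point''. The affine map $t\mapsto\sqrt2\,t-2\log\lambda$ has fixed point $b:=\frac{2\log\lambda}{\sqrt2-1}=2(\sqrt2+1)\log\lambda$, so the previous inequality is equivalent to $|\xi_{k+1}|-b\ge\sqrt2\,(|\xi_k|-b)$. Since $2(\sqrt2+1)\approx4.83<8$, the hypothesis \eqref{eqkesi2} gives $|\xi_2|-b>0$, and then induction gives $|\xi_k|-b\ge(\sqrt2)^{\,k-2}(|\xi_2|-b)$ for all $k\ge2$. Dropping the nonnegative term $b$ and writing $(\sqrt2)^{\,k-2}=2^{k/2}/2$, we obtain
\[
|\xi_k|\ge 2^{k/2}\,\frac{|\xi_2|-b}{2}=(\sqrt2-1)\,2^{k/2}\,c_1,\qquad c_1:=\frac{|\xi_2|-b}{2(\sqrt2-1)}>0,
\]
which is exactly \eqref{eqbdkesi}.

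I do not expect a genuine obstacle here: everything rests on the two facts $|\theta|=\sqrt2>1$ and $|2\log h_k|\le2\log\lambda$. The only points that need care are the bookkeeping that keeps $q_k$, $M_k$ and $h_k(q_{k-1})$ well defined along the whole orbit, and the verification that the substitution into \eqref{eqmrec} really collapses to a first-order recursion, which is where the identity $\theta^2-\theta+2=0$ is used. It is also worth remarking that the constant $8$ in \eqref{eqkesi2} is not sharp --- any constant exceeding $2(\sqrt2+1)$ would do --- which is consistent with the conclusion asserting only the existence of $c_1$ rather than a specific value.
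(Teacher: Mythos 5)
Your proof is correct and follows essentially the same route as the paper's: the same collapse of \eqref{eqmrec} via $\theta^2-\theta+2=0$ to $\xi_{k+1}=\theta\xi_k+2\log h_k(q_{k-1})$, the same bound $0<\log h_k(q_{k-1})<\log\lambda$ from \eqref{hwbd}, and the same inequality $|\xi_{k+1}|\ge\sqrt{2}\,|\xi_k|-2\log\lambda$, with only the final unrolling done differently (your affine fixed-point shift versus the paper's direct geometric-sum estimate). The only substantive difference is the constant: the paper takes $c_1=2\log\lambda$, which is the value implicitly reused in Lemma \ref{lm2} and Theorem \ref{th2}, while your $c_1=\bigl(|\xi_2|-2(\sqrt{2}+1)\log\lambda\bigr)/\bigl(2(\sqrt{2}-1)\bigr)$ is a different, in fact larger (hence still admissible downstream), choice.
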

\begin{proof}
It follows from \eqref{eqmrec}, the definition of $\theta$, and \eqref{defkesi} that
\begin{equation}\label{kesip1}
  \xi_{k+1}=\theta M_k-2M_{k-1}+2\log h_k(q_{k-1})=\theta\xi_k+2\log h_k(q_{k-1}).
\end{equation}
By \eqref{hwbd}, we know that
\begin{equation*}
  0<\log h_k(q_{k-1})<\log (\gamma_k+(1-\gamma_k)\lambda)<\log \lambda.
\end{equation*}
Since $|\theta|=\sqrt{2}$, we get by \eqref{kesip1} that
\begin{equation*}
  |\xi_{k+1}|\geq\sqrt{2}\,|\xi_k|-c_1,
\end{equation*}
where $c_1=2\log \lambda$. From \eqref{eqkesi2}, we have
\begin{align*}
  |\xi_{k+1}|&\geq2^{\frac{k-1}{2}}|\xi_2|-\frac{2^{\frac{k}{2}}-1}{\sqrt{2}-1}c_1
  \geq(2^{\frac{k+3}{2}}-\frac{2^{\frac{k}{2}}-1}{\sqrt{2}-1})c_1\\
  &=[(\sqrt{2}-1)(2^{\frac{k}{2}}+1)+2]c_1
  >(\sqrt{2}-1)2^{\frac{k}{2}}c_1.
\end{align*}
This completes the proof.
\qed
\end{proof}

Since $|\theta-1|=\sqrt{2}$, we obtain by \eqref{defkesi} that
\begin{align*}
  |\xi_k|&\leq |M_k|+|\theta-1||M_{k-1}|=|M_k|+\sqrt{2}|M_{k-1}|\\
&\leq(\sqrt{2}+1)\max\{|M_k|,|M_{k-1}|\}£¬
\end{align*}
which, together with \eqref{eqbdkesi}, gives
\begin{equation}\label{bdm}
  \max\{|M_k|,|M_{k-1}|\}\geq\frac{1}{\sqrt{2}+1}(\sqrt{2}-1)2^{\frac{k}{2}}c_1
  =(\sqrt{2}-1)^22^{\frac{k}{2}}c_1.
\end{equation}

\begin{lemma}\label{lm2}
Under the conditions of Lemma \ref{lm1}, for $k\geq2$, the following inequalities hold:
\begin{equation}\label{upbdm}
  \max_{-1\leq i\leq3}M_{k+i}\geq(\sqrt{2}-1)^22^{\frac{k}{2}}c_1-2c_1,
\end{equation}
\begin{equation}\label{lwbdm}
  \min_{-1\leq i\leq3}M_{k+i}\leq-(\sqrt{2}-1)^22^{\frac{k}{2}}c_1+2c_1.
\end{equation}
\end{lemma}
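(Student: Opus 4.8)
The plan is to show that the amplitude lower bound \eqref{bdm} on the pair $\{M_{k-1},M_k\}$, once fed through the linear recurrence \eqref{eqmrec}, forces the five consecutive terms $M_{k-1},M_k,M_{k+1},M_{k+2},M_{k+3}$ to contain both a very large term and a very negative term. First I would rewrite \eqref{eqmrec} in the compact form $M_{j+1}=M_j-2M_{j-1}+r_j$ with $r_j:=2\log h_j(q_{j-1})$, and record that by \eqref{hwbd} together with $c_1=2\log\lambda$ one has $0<r_j<c_1$ for every $j$. Iterating this relation twice and collecting terms yields the two identities
\begin{equation*}
 M_{k+1}+M_{k+2}=-4M_{k-1}+2r_k+r_{k+1},\qquad M_{k+2}+M_{k+3}=-4M_k+2r_{k+1}+r_{k+2},
\end{equation*}
in which each ``error'' $2r_k+r_{k+1}$ and $2r_{k+1}+r_{k+2}$ lies strictly between $0$ and $3c_1$. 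The role of these identities is that a large value of $M_{k-1}$ (resp. $M_k$), of whatever sign, is reflected with opposite sign and roughly quadrupled magnitude into the pair $(M_{k+1},M_{k+2})$ (resp. $(M_{k+2},M_{k+3})$).

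Next I would carry out a short case analysis driven by \eqref{bdm}, which guarantees that at least one of $|M_{k-1}|\ge R_k$ or $|M_k|\ge R_k$ holds, where $R_k:=(\sqrt{2}-1)^2 2^{k/2}c_1$. If $M_{k-1}\ge R_k$, then $M_{k-1}$ already lies in the window, so $\max_{-1\le i\le3}M_{k+i}\ge R_k\ge R_k-2c_1$, while the first identity gives $M_{k+1}+M_{k+2}\le-4R_k+3c_1$, hence $\min(M_{k+1},M_{k+2})\le-2R_k+\tfrac32 c_1\le-R_k+2c_1$. If instead $M_{k-1}\le-R_k$, the roles swap: the minimum over the window is $\le-R_k$ trivially, and $M_{k+1}+M_{k+2}\ge4R_k$ yields $\max(M_{k+1},M_{k+2})\ge2R_k\ge R_k-2c_1$. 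The two remaining cases $M_k\ge R_k$ and $M_k\le-R_k$ are handled identically, using the pair $(M_{k+2},M_{k+3})$ and the second identity in place of the first. Since these four sign cases are exhaustive, in every case both \eqref{upbdm} and \eqref{lwbdm} hold.

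I do not anticipate a genuine obstacle here; the argument is essentially bookkeeping. The one conceptual subtlety worth flagging is that \eqref{bdm} only produces a term of large modulus and says nothing about its sign, so neither \eqref{upbdm} nor \eqref{lwbdm} follows from it directly — the recurrence must be used to manufacture a term of the opposite sign two steps later, and this is exactly why a window of width five (indices $-1$ through $3$, rather than, say, three) is needed. The only place requiring a little care is tracking the additive $r_j$ terms through the two iterations so that the slack constants come out as exactly $-2c_1$ and $+2c_1$; the estimates above show there is in fact room to spare, since $-2R_k+\tfrac32 c_1\le-R_k+2c_1$ and $2R_k\ge R_k-2c_1$ both hold comfortably.
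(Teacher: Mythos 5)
Your proof is correct and follows essentially the same route as the paper: both feed the amplitude bound \eqref{bdm} through the recurrence \eqref{eqmrec}, with the error controlled by $0<2\log h_j(q_{j-1})<c_1$, and run a sign case analysis to manufacture terms of both signs inside the five-index window. Your only variation is bookkeeping: summing two consecutive steps (e.g. $M_{k+1}+M_{k+2}=-4M_{k-1}+2r_k+r_{k+1}$) and applying an averaging argument lets you skip the paper's sub-cases on the sign of the neighboring term and yields \eqref{upbdm} and \eqref{lwbdm} simultaneously in each case.
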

\begin{proof}
{{The inequality \eqref{upbdm} holds true if
\begin{equation*}
  M_{k-1}\geq(\sqrt{2}-1)^22^{\frac{k}{2}}c_1
\end{equation*}
or
\begin{equation*}
  M_{k}\geq(\sqrt{2}-1)^22^{\frac{k}{2}}c_1.
\end{equation*}
Suppose that the above two inequalities are false. By \eqref{bdm}, we know that either
\begin{equation*}
  M_{k-1}\leq-(\sqrt{2}-1)^22^{\frac{k}{2}}c_1
\end{equation*}
or
\begin{equation*}
  M_{k}\leq-(\sqrt{2}-1)^22^{\frac{k}{2}}c_1.
\end{equation*}
From \eqref{eqmrec}, we have
\begin{align}\label{eqm2}
  M_{k+2}&=M_{k+1}-2M_k+2\log h(q_k)\nonumber\\
  &=-M_k-2M_{k-1}+2\log h(q_k)+2\log h_k(q_{k-1}).
\end{align}
(i) $M_{k-1}\leq-(\sqrt{2}-1)^22^{\frac{k}{2}}c_1$.
If $M_{k}<0$, it follows from \eqref{eqm2} that
\begin{equation*}
  M_{k+2}\geq-2M_{k-1}+2\log h_k(q_{k-1})\geq(\sqrt{2}-1)^22^{\frac{k}{2}}c_1-2c_1.
\end{equation*}
Otherwise, if $M_{k}\geq0$, by \eqref{eqmrec}, we get
\begin{equation*}
  M_{k+1}\geq-2M_{k-1}+2\log h_k(q_{k-1})\geq(\sqrt{2}-1)^22^{\frac{k}{2}}c_1-2c_1.
\end{equation*}
(ii) $M_{k}\leq-(\sqrt{2}-1)^22^{\frac{k}{2}}c_1$. Similar to (i), we can show that
\begin{equation*}
  M_{k+3}\geq(\sqrt{2}-1)^22^{\frac{k}{2}}c_1-2c_1
\end{equation*}
or
\begin{equation*}
  M_{k+2}\geq(\sqrt{2}-1)^22^{\frac{k}{2}}c_1-2c_1.
\end{equation*}
Thus, the inequality \eqref{upbdm} is valid.
The inequality \eqref{lwbdm} can be established in a similar way.}}
\qed
\end{proof}

\begin{theorem}\label{th2}
  If $\gamma_k\in(0,1)$, \eqref{initialg} and \eqref{eqkesi2} hold, the sequence $\{\|g_k\|\}$ converges to zero $R$-superlinearly.
\end{theorem}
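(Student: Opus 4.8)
The plan is to convert the (doubly‑exponentially growing) window estimates of Lemma~\ref{lm2} on $M_k=\log q_k$ into a super‑geometric decay of $\|g_k\|$. The first step is to record elementary bounds on the per‑step contraction factors appearing in \eqref{eqg1}--\eqref{eqg2}. Since $\alpha_k\in[\alpha_k^{BB2},\alpha_k^{BB1}]$ with $\alpha_k^{BB2}=\frac{\lambda+q_{k-1}}{\lambda^2+q_{k-1}}\ge\frac1\lambda$ and $\alpha_k^{BB1}=\frac{1+q_{k-1}}{\lambda+q_{k-1}}\le1$, one gets $0\le1-\alpha_k\le1-\alpha_k^{BB2}=\frac{\lambda(\lambda-1)}{\lambda^2+q_{k-1}}$ and $0\le\lambda\alpha_k-1\le\lambda\alpha_k^{BB1}-1=\frac{(\lambda-1)q_{k-1}}{\lambda+q_{k-1}}$; squaring and using $q_{k-1}=e^{M_{k-1}}$ yields
\begin{equation*}
(g_{k+1}^{(1)})^2\le\lambda^2(\lambda-1)^2e^{-2M_{k-1}}(g_k^{(1)})^2,\qquad
(g_{k+1}^{(2)})^2\le\tfrac{(\lambda-1)^2}{\lambda^2}e^{2M_{k-1}}(g_k^{(2)})^2,
\end{equation*}
together with the uniform bounds $(g_{k+1}^{(1)})^2\le(1-\tfrac1\lambda)^2(g_k^{(1)})^2$ and $(g_{k+1}^{(2)})^2\le(\lambda-1)^2(g_k^{(2)})^2$. (If $\lambda=1$ then $g_3=0$ and the claim is trivial, so assume $\lambda>1$; then $\alpha_k<1<\lambda\alpha_k$ shows inductively that $g_k^{(i)}\ne0$ and $q_k$ is well defined for all $k$.) Hence $\{(g_k^{(1)})^2\}$ is non‑increasing, while $\{(g_k^{(2)})^2\}$ can grow by at most a fixed factor per step.

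Next, I would fix a large index $m$ and apply Lemma~\ref{lm2} to the window $\{m-5,\dots,m-1\}$ (i.e.\ with the lemma's index equal to $m-4$): there exist $j_1,j_2\in\{m-5,\dots,m-1\}$ with $M_{j_1}\ge R_m$ and $M_{j_2}\le -R_m$, where $R_m:=(\sqrt2-1)^2 2^{(m-4)/2}c_1-2c_1\to\infty$. For the first coordinate, apply the ``large $M$'' inequality above at step $j_1+1\le m$ and use monotonicity of $\{(g_k^{(1)})^2\}$ both before $j_1$ and after $j_1+1$ to obtain $(g_m^{(1)})^2\le\lambda^2(\lambda-1)^2e^{-2R_m}(g_1^{(1)})^2$. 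For the second coordinate, apply the ``very negative $M$'' inequality at step $j_2+1$, bound $(g_{j_2}^{(2)})^2$ from above by its worst‑case growth $(\max\{1,\lambda-1\})^{2(m-2)}(g_1^{(2)})^2$ and the at most four remaining steps up to $m$ by a constant factor, giving $(g_m^{(2)})^2\le C_1C_2^{\,m}e^{-2R_m}(g_1^{(2)})^2$ for constants $C_1,C_2$ depending only on $\lambda$.

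Adding the two coordinate bounds yields $\|g_m\|^2\le(c'+c''C_2^{\,m})e^{-2R_m}$, hence $\|g_m\|^{2/m}\le(c'+c''C_2^{\,m})^{1/m}e^{-2R_m/m}$. Since $(c'+c''C_2^{\,m})^{1/m}$ stays bounded while $R_m/m\to\infty$ (because $2^{m/2}/m\to\infty$), we conclude $\limsup_{m\to\infty}\|g_m\|^{1/m}=0$, i.e.\ $\{\|g_k\|\}$ converges to zero $R$‑superlinearly.

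The main obstacle is the bookkeeping in the middle step: one must keep the single contracting step $j_i+1$ inside the range $\{2,\dots,m\}$ (which is why the window is chosen to end at $m-1$), and one must absorb the possible exponential‑in‑$m$ growth of $(g_k^{(2)})^2$ that may occur before that step. This is harmless precisely because $R_m\sim c\,2^{m/2}$, so $e^{-2R_m}$ overwhelms any $C_2^{\,m}$; it is exactly here that the doubly‑exponential lower bound of Lemma~\ref{lm1} — equivalently the quadratic characteristic root $\theta^2-\theta+2=0$ with $|\theta|=\sqrt2$ — is indispensable, since a merely geometric lower bound on $|M_k|$ would not produce superlinear decay.
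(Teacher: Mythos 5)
Your proposal is correct in substance and follows essentially the same route as the paper: the same coordinate-wise per-step estimates (a large $q_{k-1}$ contracts the first component, a small $q_{k-1}$ contracts the second, with uniform bounds $|1-\alpha_k|\le 1-1/\lambda$ and $|1-\lambda\alpha_k|\le\lambda-1$ for the remaining steps), combined with the five-index window bounds of Lemma~\ref{lm2}. The only genuine difference is the assembly: the paper telescopes a five-step recursion $\|g_{k+5}\|\le\lambda(\lambda-1)^5\exp\bigl(-(\sqrt{2}-1)^2 2^{k/2}c_1+2c_1\bigr)\|g_k\|$, whereas you bound $\|g_m\|$ directly from $\|g_1\|$ using only the last window and absorb the at-most-geometric growth of the second component into $e^{-2R_m}$; both yield $R$-superlinearity, and your explicit handling of $\lambda=1$ and of the nonvanishing of $g_k^{(i)}$ (so that $q_k$ stays well defined) is a small point the paper leaves implicit.

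One index slip needs fixing. Since $\alpha_k$ in \eqref{bbc2} depends on $q_{k-1}$, the step that exploits $M_j$ is the one mapping $g_{j+1}$ to $g_{j+2}$, not $g_j$ to $g_{j+1}$. With your window $\{m-5,\dots,m-1\}$, the case $j_1=m-1$ (or $j_2=m-1$) places the contracting step beyond $g_m$, and the chain of inequalities for $(g_m^{(i)})^2$ breaks down; the criterion ``$j_i+1\le m$'' is not the right one, you need $j_i+2\le m$. The repair is immediate: apply Lemma~\ref{lm2} with index $m-5$, i.e.\ take the window $\{m-6,\dots,m-2\}$, which merely replaces $2^{(m-4)/2}$ by $2^{(m-5)/2}$ in $R_m$; the rest of your argument, including the final $\limsup_m\|g_m\|^{1/m}=0$ step, goes through unchanged.
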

\begin{proof}
From \eqref{eqg2}, we have
\begin{align}\label{gk12up}
  |g_{k+1}^{(2)}|
  &=\frac{q_{k-1}(\lambda-1) \left[\gamma_k(\lambda^2+q_{k-1})+
  (1-\gamma_k)(\lambda+q_{k-1})\right]}{(\lambda+q_{k-1})(\lambda^2+q_{k-1})}
  |g_k^{(2)}|\nonumber\\
  &\leq\frac{q_{k-1}(\lambda-1)(\lambda^2+q_{k-1})}{(\lambda+q_{k-1})(\lambda^2+q_{k-1})}
  |g_k^{(2)}|\nonumber\\
  &\leq(\lambda-1)q_{k-1}|g_k^{(2)}|.
\end{align}
Since $\alpha_k\in(\lambda^{-1},1)$, we have
\begin{equation}\label{gkiup}
|g_{k+1}^{(i)}|\leq(\lambda-1)|g_k^{(i)}|,~~i=1,2,
\end{equation}
{{which gives
\begin{equation}\label{gkiup2}
|g_{k+5}^{(i)}|\leq(\lambda-1)^{(5-j)}|g_{k+j}^{(i)}|,~~i=1,2,~j=1,\ldots,5.
\end{equation}
It follows from \eqref{gk12up}, \eqref{gkiup} and \eqref{gkiup2} that
\begin{align}\label{gkiup3}
|g_{k+5}^{(2)}|&\leq(\lambda-1)^{(5-j+1)}q_{k+j-2}|g_{k+j-1}^{(2)}|\nonumber\\
&\leq(\lambda-1)^{5}q_{k+j-2}|g_{k}^{(2)}|
,~~i=1,2,~j=1,\ldots,5,
\end{align}
which indicates that
\begin{equation}\label{gk5up}
|g_{k+5}^{(2)}|\leq(\lambda-1)^5\left(\min_{-1\leq i\leq3}q_{k+i}\right)|g_k^{(2)}|.
\end{equation}
}}
As $M_k=\log q_k$, we know by Lemma \ref{lm2} and \eqref{gk5up} that
\begin{equation*}
|g_{k+5}^{(2)}|\leq(\lambda-1)^5\exp\left(-(\sqrt{2}-1)^22^{\frac{k}{2}}c_1+2c_1\right)|g_k^{(2)}|.
\end{equation*}
Similarly to \eqref{gk12up}, we have
\begin{align*}
  |g_{k+1}^{(1)}|
  &=\frac{(\lambda-1) \left[\gamma_k(\lambda^2+q_{k-1})+
  (1-\gamma_k)\lambda(\lambda+q_{k-1})\right]}{(\lambda+q_{k-1})(\lambda^2+q_{k-1})}
  |g_k^{(1)}|\\
  &\leq\frac{\lambda(\lambda-1)(\lambda+q_{k-1})}{(\lambda+q_{k-1})(\lambda^2+q_{k-1})}
  |g_k^{(1)}|\\
  &<\lambda(\lambda-1)\frac{1}{q_{k-1}}|g_k^{(1)}|,
\end{align*}
which together with {{\eqref{gkiup2}}} yields that
\begin{align}\label{gk11up}
|g_{k+5}^{(1)}|&\leq\lambda(\lambda-1)^5\frac{1}{\max\limits_{-1\leq i\leq3}{{q_{k+i}}}}|g_k^{(1)}|\nonumber\\
&\leq\lambda(\lambda-1)^5\exp\left(-(\sqrt{2}-1)^22^{\frac{k}{2}}c_1+2c_1\right)|g_k^{(1)}|.
\end{align}
By \eqref{gk12up} and \eqref{gk11up}, for any $k$, we have
\begin{equation*}
\|g_{k+5}\|\leq\lambda(\lambda-1)^5\exp\left(-(\sqrt{2}-1)^22^{\frac{k}{2}}c_1+2c_1\right)\|g_k\|.
\end{equation*}
That is, $\{\|g_k\|\}$ converges to zero $R$-superlinearly.
\qed
\end{proof}

Theorem \ref{th2}, together with the analysis for the BB method in \cite{Barzilai1988two,dai2013new}, shows that, for any $\gamma_k\in[0,1]$, the family \eqref{bbc} is $R$-superlinearly convergent.



\subsection{$n$-dimensional case}
In this subsection, we show $R$-linear convergence of the family \eqref{bbc} for $n$-dimensional quadratics.

Dai \cite{dai2003alternate} has proved that if $A$ has the form \eqref{formA} with $1=\lambda_1\leq\lambda_2\leq\cdots\leq\lambda_n$ and the stepsizes of gradient method \eqref{eqitr} have the following Property (A), then either $g_k=0$ for some finite $k$ or the sequence $\{\|g_k\|\}$ converges to zero $R$-linearly.

\noindent
\textbf{Property (A)} \cite{dai2003alternate}. Suppose that there exist an integer $m$ and positive constants $M_1\geq\lambda_1$ and $M_2$ such that
\begin{itemize}
  \item[(i)] $\lambda_1\leq\alpha_k^{-1}\leq M_1$;
  \item[(ii)] for any integer $l\in[1,n-1]$ and $\epsilon>0$, if $G(k-j,l)\leq\epsilon$ and $(g_{k-j}^{(l+1)})^2\geq M_2\epsilon$ hold for $j\in[0,\min\{k,m\}-1]$, then $\alpha_k^{-1}\geq\frac{2}{3}\lambda_{l+1}$.
\end{itemize}
Here,
\begin{equation*}
  G(k,l)=\sum_{i=1}^l(g_k^{(i)})^2.
\end{equation*}

Now we show $R$-linear convergence of the proposed family by proving that the stepsize \eqref{bbc} satisfies Property (A).
\begin{theorem}\label{th3}
Suppose that the sequence $\{\|g_k\|\}$ is generated by the family \eqref{bbc} applied to $n$-dimensional quadratics with the matrix $A$ has the form \eqref{formA} and $1=\lambda_1\leq\lambda_2\leq\cdots\leq\lambda_n$. Then either $g_k=0$ for some finite $k$ or the sequence $\{\|g_k\|\}$ converges to zero $R$-linearly.
\end{theorem}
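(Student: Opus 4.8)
The plan is to reduce the statement to Dai's criterion \cite{dai2003alternate} recalled above: it suffices to exhibit constants $m$, $M_1\ge\lambda_1$, $M_2$ for which the stepsize \eqref{bbc} satisfies \textbf{Property (A)}. If $g_k=0$ for some finite $k$ there is nothing to prove, so assume $g_k\neq0$ for all $k$; since $y_{k-1}=As_{k-1}$ and $A$ is positive definite, $s_{k-1}^Ty_{k-1}=s_{k-1}^TAs_{k-1}>0$, so $\alpha_k^{BB1}$ and $\alpha_k^{BB2}$ are well defined and positive, and being a convex combination of them with weight $\gamma_k\in[0,1]$ the stepsize \eqref{bbc} obeys $\alpha_k^{BB2}\le\alpha_k\le\alpha_k^{BB1}$. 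For the quadratic \eqref{qdpro} with $A$ as in \eqref{formA} one has $g_k=Ax_k$, hence
\[
  \frac{1}{\alpha_k^{BB1}}=\frac{g_{k-1}^TAg_{k-1}}{g_{k-1}^Tg_{k-1}},\qquad
  \frac{1}{\alpha_k^{BB2}}=\frac{g_{k-1}^TA^2g_{k-1}}{g_{k-1}^TAg_{k-1}},
\]
and both quantities are weighted averages of $\lambda_1,\dots,\lambda_n$, hence lie in $[\lambda_1,\lambda_n]$. Part (i) of \textbf{Property (A)} follows at once: taking reciprocals in $\alpha_k^{BB2}\le\alpha_k\le\alpha_k^{BB1}$ gives $1/\alpha_k^{BB1}\le1/\alpha_k\le1/\alpha_k^{BB2}$, so $\lambda_1\le\alpha_k^{-1}\le\lambda_n$ and (i) holds with $M_1=\lambda_n$.

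For part (ii) I would take $m=2$ and $M_2=2$. Fix $l\in[1,n-1]$ and $\epsilon>0$ and suppose $G(k-j,l)\le\epsilon$ and $(g_{k-j}^{(l+1)})^2\ge 2\epsilon$ for every $j\in[0,\min\{k,m\}-1]$; for $k\ge2$ this includes $j=1$, i.e. $G(k-1,l)\le\epsilon$ and $(g_{k-1}^{(l+1)})^2\ge2\epsilon$. Using $\alpha_k\le\alpha_k^{BB1}$ and the formula for $1/\alpha_k^{BB1}$,
\[
  \frac{1}{\alpha_k}\ \ge\ \frac{1}{\alpha_k^{BB1}}
  =\frac{\sum_{i=1}^n\lambda_i(g_{k-1}^{(i)})^2}{\sum_{i=1}^n(g_{k-1}^{(i)})^2}
  \ \ge\ \lambda_{l+1}\,\frac{T}{G(k-1,l)+T},\qquad T:=\sum_{i=l+1}^n(g_{k-1}^{(i)})^2 .
\]
Since $T\ge(g_{k-1}^{(l+1)})^2\ge2\epsilon$, $G(k-1,l)\le\epsilon$, and $t\mapsto t/(\epsilon+t)$ is increasing, the last ratio is at least $2\epsilon/(\epsilon+2\epsilon)=2/3$, so $\alpha_k^{-1}\ge\tfrac23\lambda_{l+1}$, which is exactly (ii). (For the first iteration, where $\alpha_1$ is a prescribed initial stepsize, one simply takes $\alpha_1^{-1}\in[\lambda_1,\lambda_n]$, or disregards the first term, $R$-linear convergence being an asymptotic property.) Hence \textbf{Property (A)} holds and the conclusion follows from the theorem of Dai \cite{dai2003alternate}.

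I do not anticipate a genuine obstacle: this is a short verification, and all the substantive work is contained in the framework of \cite{dai2003alternate}. The one point that must be handled with care is that part (ii) has to be driven by the \emph{upper} bound $\alpha_k\le\alpha_k^{BB1}$ — which converts the required lower bound on $\alpha_k^{-1}$ into the Rayleigh-quotient estimate at $g_{k-1}$ — rather than by $\alpha_k\ge\alpha_k^{BB2}$; and one should check that the constants $m=2$, $M_1=\lambda_n$, $M_2=2$ can be chosen uniformly in the sequence $\{\gamma_k\}\subset[0,1]$, which they can, since the convex-combination structure was used only through the single inequality $\alpha_k^{BB2}\le\alpha_k\le\alpha_k^{BB1}$.
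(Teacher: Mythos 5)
Your proposal is correct and follows essentially the same route as the paper: verify Dai's Property (A) with $M_1=\lambda_n$, $M_2=2$, using $\alpha_k^{BB2}\le\alpha_k\le\alpha_k^{BB1}$ for (i) and the Rayleigh-quotient bound $\alpha_k^{-1}\ge1/\alpha_k^{BB1}\ge\lambda_{l+1}T/(\epsilon+T)\ge\tfrac23\lambda_{l+1}$ at $g_{k-1}$ for (ii). Your explicit choice $m=2$ and the remark about the first iterate only make precise what the paper leaves implicit.
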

\begin{proof}
Let $M_1=\lambda_n$ and $M_2=2$. We have by \eqref{bbc} and the fact $\gamma_k\in[0,1]$ that
\begin{equation*}
  \alpha_{k}^{BB2}\leq\alpha_k\leq\alpha_{k}^{BB1}.
\end{equation*}
Thus, (i) of Property (A) holds. If $G(k-j,l)\leq\epsilon$ and $(g_{k-j}^{(l+1)})^2\geq M_2\epsilon$ hold for $j\in[0,\min\{k,m\}-1]$, we have
\begin{align*}\label{alfinvup}
  \alpha_{k}^{-1}&\geq\frac{1}{\alpha_{k}^{BB1}}=\frac{1}{\alpha_{k-1}^{SD}}
  =\frac{\sum_{i=1}^n\lambda_i(g^{(i)}_{k-1})^2}{\sum_{i=1}^n(g^{(i)}_{k-1})^2}\nonumber\\
&\geq\frac{\lambda_{l+1}\sum_{i=l+1}^n(g^{(i)}_{k-1})^2}{\epsilon\|g_{k-1}\|^2+\sum_{i=l+1}^n(g^{(i)}_{k-1})^2}\nonumber\\
&\geq\frac{M_2}{M_2+1}\lambda_{l+1}
=\frac{2}{3}\lambda_{l+1}.
\end{align*}
That is, (ii) of Property (A) holds. This completes the proof.
\qed
\end{proof}

\section{Selecting $\gamma_{k}$}
In this section, we present three different selection rules for the parameter $\gamma_{k}$ of the family \eqref{bbc}.

The simplest scheme for choosing $\gamma_{k}$ is to fix it for all iterations. For example, we can set $\gamma_{k}=0.1, 0.2$, etc. However, since the information carried by the two BB stepsizes changes as the iteration process going on such a fixed scheme may deteriorate the performance because it fixes the ratios of the long BB stepsize $\alpha_{k}^{BB1}$ and the short BB stepsize $\alpha_{k}^{BB2}$ contributed to the stepsize $\alpha_{k}$. Thus, it is better to vary $\gamma_{k}$ at each iteration.

One direct way for modifying $\gamma_{k}$ is, as the randomly relaxed Cauchy method \cite{raydan2002relaxed}, randomly choose it in the interval $(0,1)$. But this scheme determines the value of $\gamma_{k}$ without using any information at the current and former iterations.


The next strategy borrows the idea of cyclic gradient methods \cite{dai2003alternate,dai2005asymptotic,dai2006cyclic,raydan2002relaxed}, where a stepsize is reused for $m$ iterations. Such a cyclic scheme is superior to its noncyclic counterpart in both theory and practice. Dai and Fletcher \cite{dai2005asymptotic} showed that if the cyclic length $m$ is greater than $n/2$, the cyclic SD method is likely to be $R$-superlinearly convergent. Similar numerical convergence evidences were also observed for the CBB1 method in \cite{dai2006cyclic}. Motivated by those advantages of the cyclic scheme, for the family \eqref{bbc}, we choose $\gamma_{k}$ such that the current stepsize approximates the former one as much as possible. That is,
\begin{equation*}
  \gamma_{k}=\arg\min_{\gamma\in[0,1]}~\left|\gamma \alpha_{k}^{BB1}+(1-\gamma)\alpha_{k}^{BB2}-\alpha_{k-1}\right|,
\end{equation*}
which yields
\begin{equation}\label{eqcyc}
  \gamma_{k}^{C}=\min\left\{1,\max\left\{0,\frac{\alpha_{k-1}-\alpha_{k}^{BB2}}{\alpha_{k}^{BB1}-\alpha_{k}^{BB2}}\right\}\right\}.
\end{equation}
Clearly, $\gamma_{k}^{C}=0$ when $\alpha_{k-1}\leq\alpha_{k}^{BB2}$; $\gamma_{k}^{C}=1$ when $\alpha_{k-1}\geq\alpha_{k}^{BB1}$. This gives the following stepsize:
\begin{equation}\label{eqcyca}
  \tilde{\alpha}_{k}=\left\{
                   \begin{array}{ll}
                     \alpha_{k}^{BB2}, & \hbox{if $\alpha_{k-1}\leq\alpha_{k}^{BB2}$;} \\
                     \alpha_{k}^{BB1}, & \hbox{if $\alpha_{k-1}\geq\alpha_{k}^{BB1}$;}\\
                    \alpha_{k-1}, & \hbox{otherwise.}
                   \end{array}
                 \right.
\end{equation}
Recall that, for quadratics, $\alpha_{k}^{BB1}=\alpha_{k-1}^{SD}$ and $\alpha_{k}^{BB2}=\alpha_{k-1}^{MG}$, where
\begin{equation*}
  \alpha_{k}^{MG}=\arg\min_{\alpha\in \mathbb{R}} \|g(x_{k}-\alpha g_{k})\|=\frac{g_{k}^TAg_{k}}{g_{k}^TA^2g_{k}},
\end{equation*}
which is a short stepsize and satisfies $\alpha_{k}^{MG}\leq\alpha_{k}^{SD}$. We refer the reader to \cite{dai2003altermin} for additional details on $\alpha_{k}^{MG}$. It follows from \eqref{eqcyca} that $\tilde{\alpha}_{k}$ is adaptively selected and truncated by making use of the information of the former iteration. In particular, the stepsize is increased if the former one is too short (i.e., $\alpha_{k-1}\leq\alpha_{k-1}^{MG}$) while it is decreased if the former one is too long (i.e., $\alpha_{k-1}\geq\alpha_{k-1}^{SD}$). {{Moreover, the former stepsize will be reused if it lies in $[\alpha_{k}^{BB2},\alpha_{k}^{BB1}]$. Thus, \eqref{eqcyca} is an \emph{adaptive truncated cyclic} scheme and we will call it ATC for short.

As cyclic methods, we need to update the stepsize every $m$ iterations to avoid using a stepsize for too many iterations. Many different stepsizes can be employed. Here, we suggest three candidates. The first is the long BB stepsize $\alpha_{k}^{BB1}$, i.e.,
 \begin{equation}\label{eqcyca3}
  \alpha_{k}^{ATC1}=\left\{
                   \begin{array}{ll}
                    \alpha_{k}^{BB1}, & \hbox{if $mod(k,m)=0$;}\\
                    \tilde{\alpha}_{k}, & \hbox{otherwise.}
                   \end{array}
                 \right.
\end{equation}
The second is the short BB stepsize $\alpha_{k}^{BB2}$, i.e.,
 \begin{equation}\label{eqcyca4}
  \alpha_{k}^{ATC2}=\left\{
                   \begin{array}{ll}
                    \alpha_{k}^{BB2}, & \hbox{if $mod(k,m)=0$;}\\
                    \tilde{\alpha}_{k}, & \hbox{otherwise.}
                   \end{array}
                 \right.
\end{equation}
The last is $\alpha_{k}^P$ given by \eqref{ap}, which is a special case of the family \eqref{bbc}. That is,
\begin{equation}\label{eqcyca2}
  \alpha_{k}^{ATC3}=\left\{
                   \begin{array}{ll}
                    \alpha_{k}^P, & \hbox{if $mod(k,m)=0$;}\\
                    \tilde{\alpha}_{k}, & \hbox{otherwise.}
                   \end{array}
                 \right.
\end{equation}
In what follows we shall refer \eqref{eqcyca3}, \eqref{eqcyca4} and \eqref{eqcyca2} to as ATC1, ATC2 and ATC3,
respectively.

We tested the family \eqref{bbc} with fixed $\gamma_{k}$ on quadratic problems to see how the values of $\gamma_{k}$ affect the performance. In particular, $\gamma_{k}$ is set to 0.1, 0.3, 0.5, 0.7 and 0.9 for all $k$, respectively. The examples in \cite{dai2003altermin,friedlander1998gradient,zhou2006gradient} were employed, where $A=QVQ^T$ with
\begin{equation*}
  Q=(I-2w_3w_3^T)(I-2w_2w_2^T)(I-2w_1w_1^T),
\end{equation*}
and $w_1$, $w_2$, and $w_3$ are unitary random vectors, $V=diag(v_1,\ldots,v_n)$ is a diagonal matrix where $v_1=1$, $v_n=\kappa$, and $v_j$, $j=2,\ldots,n-1$, are randomly generated between 1 and $\kappa$. We stopped the iteration if the number of iteration exceeds 20,000 or
\begin{equation}\label{eqstop}
  \|g_k\|\leq\epsilon\|g_1\|
\end{equation}
is satisfied for some given $\epsilon$.

Four values of the condition number $\kappa$: $10^3, 10^4, 10^5, 10^6$ as well as three values of $\epsilon$: $10^{-6}, 10^{-9}, 10^{-12}$ were used. For each value of $\kappa$ and $\epsilon$, 10 instances with $v_j$ evenly distributed in $[1,\kappa]$ were generated. For each instance, the entries of $b$ were randomly generated in $[-10,10]$ and the vector $e=(1,\ldots,1)^T$ was used as the starting point.

The BB method was also run for comparison, i.e. $\gamma_{k}=1$. We compared the performance of the algorithms by the required number of iterations, as described in \cite{dolan2002}. In other words, for each method, we plot the ratio of problems for which the method is within a factor $\rho$ of the least iterations. For the 100-dimensional case, we can see from Figure \ref{randp1-fix} that the performance of the family improves as $\gamma_{k}$ becomes larger. However, for the 1000-dimensional case, Figure \ref{randp1-fix2} shows that the family \eqref{bbc} with $\gamma_{k}=0.7$ or 0.9 can outperform the BB method for some $\rho$ around 1.5. That is, for some problems, the long BB stepsize $\alpha_{k}^{BB1}$ may not be the best choice in the family.

\begin{figure}[ht!b]
\centering
    \includegraphics[width=0.75\textwidth,height=0.55\textwidth]{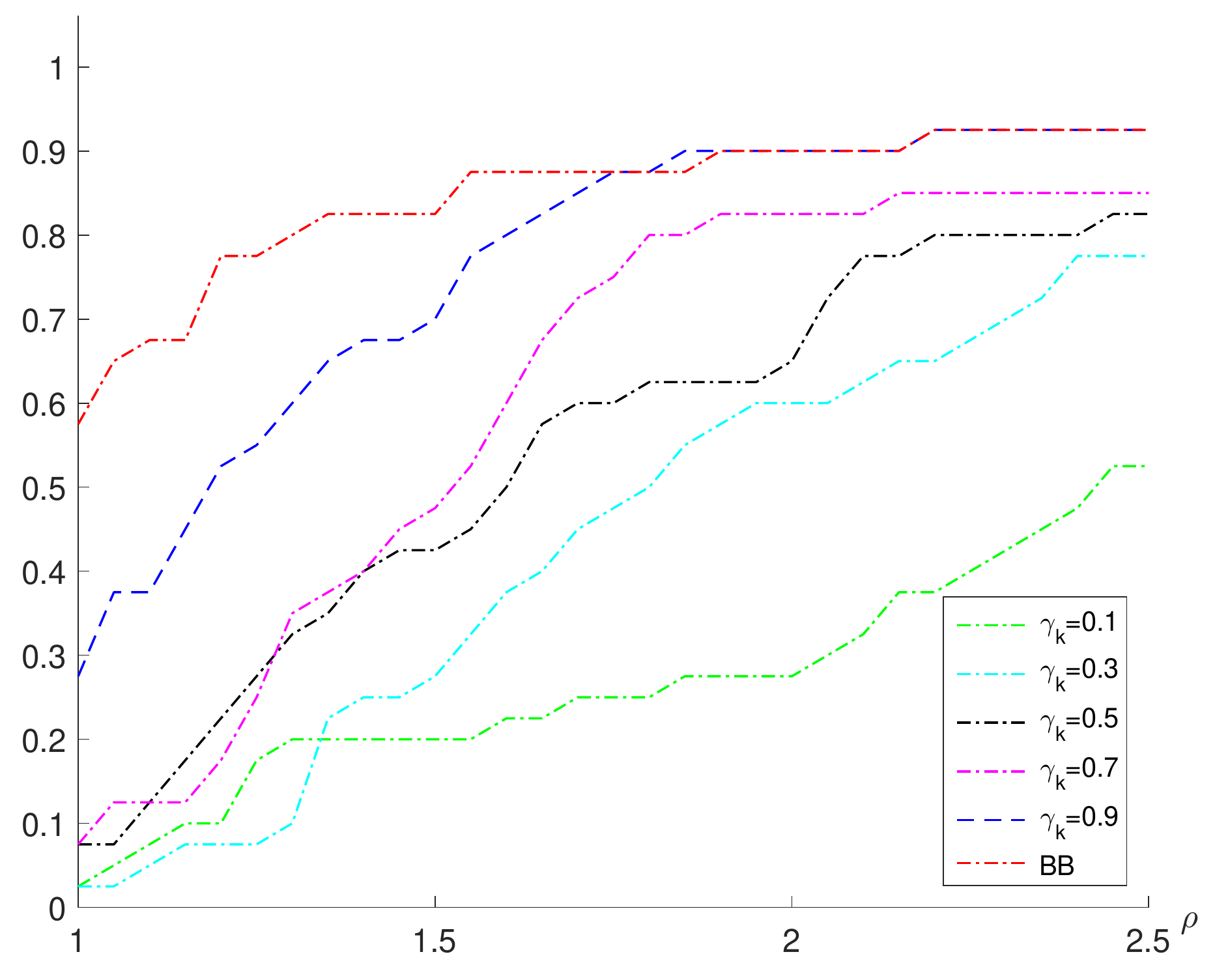}
  \caption{Performance profile of the family \eqref{bbc} with fixed $\gamma_{k}$ based on number \protect\\of iterations for 100-dimensional problems.}\label{randp1-fix}
\end{figure}

\begin{figure}[ht!b]
\centering
  \includegraphics[width=0.75\textwidth,height=0.55\textwidth]{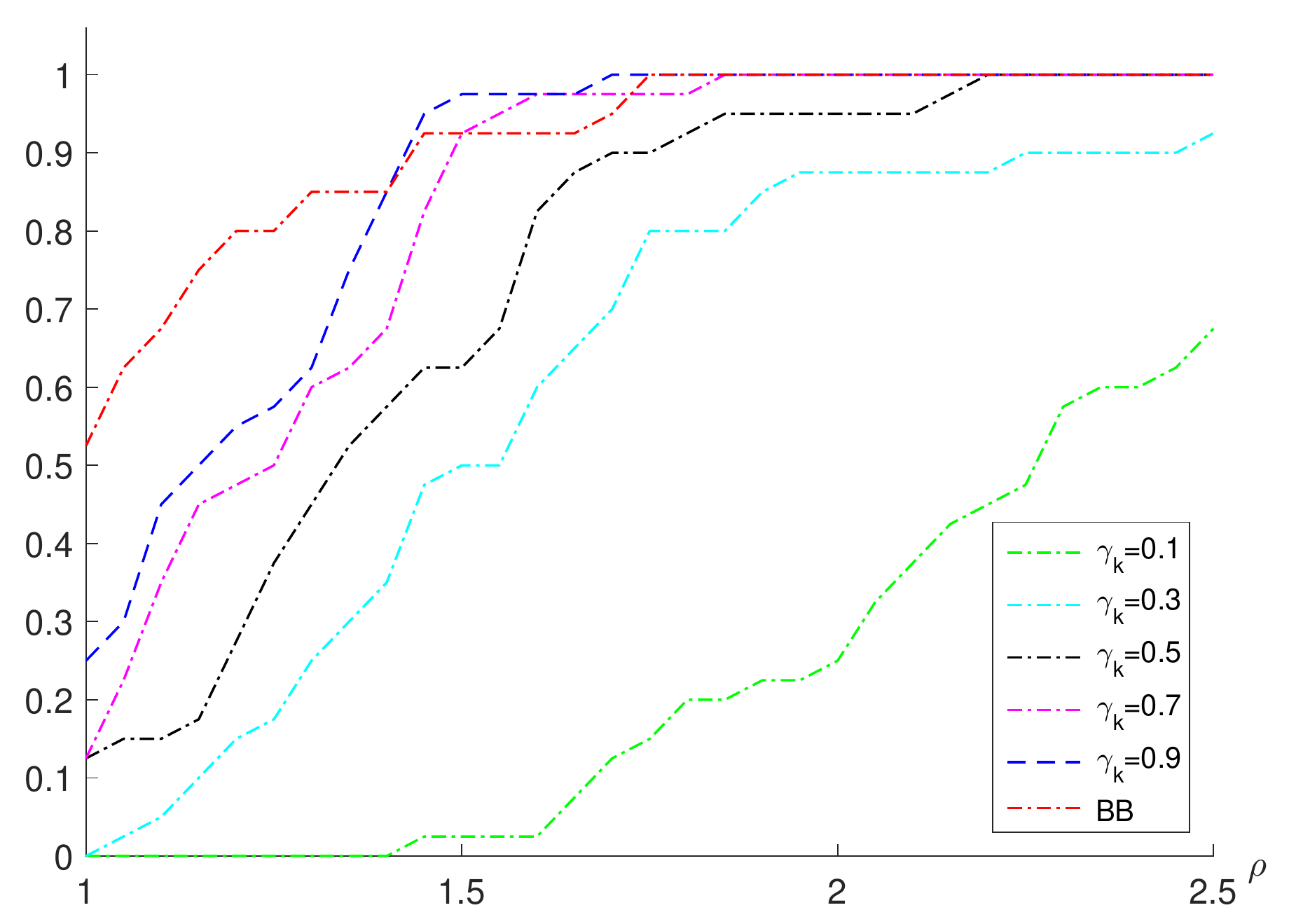}
  \caption{Performance profile of the family \eqref{bbc} with fixed $\gamma_{k}$ based on number \protect\\of iterations for 1000-dimensional problems.}\label{randp1-fix2}
\end{figure}

Then, we applied ATC1, ATC2 and ATC3 to the above problem with $n=1000$ and different $v_j$ distributions. Particularly, two sets were generated: (1) $v_j$ are evenly distributed in $[1,\kappa]$ as the former example; (2) 20\% of $v_j$ are evenly distributed in $[1,100]$ and others are in $[\frac{\kappa}{2},\kappa]$. We used three values of the condition number $\kappa=10^4, 10^5, 10^6$ and also three values of $\epsilon$ as the above problem. Other settings were same as the former example.

\begin{table}[ht!b]
\begin{center}
\caption{Number of iterations of ATC1, ATC2 and ATC3 with different $m$ on problems with different spectrum distributions.}\label{tb1}
\setlength{\leftskip}{-20pt}
\resizebox{1.1\textwidth}{45mm}{
\scriptsize
\begin{tabular}{ccccccccccccc}
 \hline
\multirow{2}{*}{Set} &\multirow{2}{*}{Method} &\multirow{2}{*}{$\kappa$} &\multicolumn{10}{c}{$m$} \\
 \cline{4-13}
& & &5 &8 &10 &20 &30 &40 &50 &60 &80 &100\\
\hline
\multirow{9}{*}{1}
&\multirow{3}{*}{ATC1}
 &$10^4$   &397.8 &435.9 &391.5 &381.6 &356.7 &350.7 &416.4 &406.4 &412.2 &407.0           \\
& &$10^5$   &3051.4 &3486.2 &2266.2 &1767.7 &1550.9 &1681.1 &1646.3 &1591.4 &1483.9 &1414.5 \\
& &$10^6$   &5864.8 &6297.0 &4467.8 &3096.0 &2608.8 &2532.9 &3154.8 &2520.5 &2853.3 &2334.8 \\

\cline{2-13}
&\multirow{3}{*}{ATC2}
 &$10^4$   &449.4 &484.2 &435.8 &407.2 &371.1 &403.9 &423.6 &384.3 &419.9 &414.8           \\
& &$10^5$   &4708.2 &3334.5 &3050.5 &2406.1 &2102.4 &2218.7 &1853.5 &1816.3 &1691.3 &1887.8 \\
& &$10^6$   &7846.2 &6356.6 &5762.5 &4562.8 &3643.1 &3587.5 &3426.5 &2852.6 &2687.7 &2862.2 \\

\cline{2-13}
&\multirow{3}{*}{ATC3}
 &$10^4$   &683.8 &620.7 &535.9 &454.4 &438.7 &444.0 &426.7 &414.3 &412.7 &427.5            \\
& &$10^5$   &9232.9 &4798.6 &8648.4 &2899.5 &2248.0 &2469.1 &2269.3 &2123.8 &1960.4 &2347.2  \\
& &$10^6$   &11526.5 &8062.5 &9560.1 &6122.4 &4625.7 &4272.6 &3786.8 &3621.4 &3304.6 &3613.7 \\

\hline
\multirow{9}{*}{2}

&\multirow{3}{*}{ATC1}
 &$10^4$    &266.8 &215.9 &272.5 &273.3 &309.6 &330.3 &357.2 &334.7 &374.4 &354.8             \\
& &$10^5$    &827.8 &794.9 &927.9 &1057.2 &1173.0 &1188.5 &1330.6 &1308.6 &1353.1 &1403.0      \\
& &$10^6$    &1322.8 &1283.3 &1553.0 &1736.2 &1875.8 &2062.4 &2165.7 &2110.3 &2239.3 &2273.7   \\

\cline{2-13}
&\multirow{3}{*}{ATC2}
 &$10^4$    &385.5 &331.1 &360.3 &350.5 &320.3 &345.2 &348.2 &336.0 &361.1 &376.8          \\
& &$10^5$    &1891.4 &1382.2 &1403.8 &1454.3 &1417.2 &1402.1 &1321.9 &1354.0 &1392.7 &1446.4\\
& &$10^6$    &3070.3 &2367.5 &2484.9 &2320.7 &2232.1 &2227.2 &2225.4 &2255.6 &2359.4 &2330.5\\

\cline{2-13}
&\multirow{3}{*}{ATC3}
 &$10^4$   &671.1 &410.7 &483.9 &371.5 &352.6 &359.7 &350.6 &359.1 &376.1 &382.6           \\
& &$10^5$   &4213.2 &1720.5 &2317.3 &1866.7 &1519.2 &1489.2 &1498.3 &1488.5 &1490.3 &1517.8 \\
& &$10^6$   &8135.7 &2895.5 &4255.8 &3046.8 &2655.6 &2534.6 &2539.4 &2393.4 &2425.7 &2552.0 \\

\hline
\end{tabular}}
\end{center}
\end{table}

We tested the three methods with different $m$. The average number of iterations are presented in Table \ref{tb1}. We can see that, for each $\kappa$ and $m$, ATC1 often outperforms ATC2 and ATC3. The performances of the three methods do not improve as $m$ increases. For the first set of problems, ATC1 with $m=30$ performs better than other values. For the second set of problems, ATC1 with $m=8$ dominates others. Thus, in the next section we only run ATC1 using these settings.

We close this section by mentioning that there are many other different rules for computing the parameter $\gamma_{k}$. For example, as the alternate gradient method \cite{dai2003alternate,dai2005projected}, we can choose $\gamma_{k}$ to alternate short stepsizes and long stepsizes. In addition, we can also use sophisticated random schemes for $\gamma_{k}$, see \cite{kalousek2017steepest} and references therein.

}}

\section{{{Numerical results}}}

In this section, we present numerical results of the family \eqref{bbc} with different settings of the parameter $\gamma_k$. We compare the performance of the ATC1 method with the following methods:
the family \eqref{bbc} with $\gamma_k$ randomly chose in $(0,1)$ (RAND), BB \cite{Barzilai1988two}, ALBB \cite{dai2005projected}, ABB \cite{zhou2006gradient}, CBB1 \cite{dai2006cyclic}, CBB2, CP, Dai-Yuan (DY) \cite{dai2005analysis}, ABBmin1 and ABBmin2 \cite{frassoldati2008new}, SDC \cite{de2014efficient}, and the family \eqref{bbc} with basic adaptive truncated cyclic scheme \eqref{eqcyca} (ATC). Since the SDC method performs better than its monotone counterpart for most problems, we only run SDC.


All methods were implemented in MATLAB (v.9.0-R2016a). All the runs were carried out on a PC with an Intel Core i7, 2.9 GHz processor and 8 GB of RAM running Windows 10 system. Moreover, we stopped the iteration if the number of iteration exceeds 20,000 or \eqref{eqstop} is satisfied for some given $\epsilon$.

Firstly, we considered randomly generated problems in the former section with different $v_j$ distributions as shown in Table \ref{tbspe}. The first two sets are same as the former section. For the third set, half of $v_j$ are in $[1,100]$ and the other half in $[\frac{\kappa}{2},\kappa]$; for the fourth set, 80\% of $v_j$ are evenly distributed in $[1,100]$ and others are in $[\frac{\kappa}{2},\kappa]$. The fifth set has 20\% of $v_j$ are in $[1,100]$, 20\% of $v_j$ are in $[100,\frac{\kappa}{2}]$ and the others in $[\frac{\kappa}{2},\kappa]$. The last two sets only has either 10 small $v_j$ or 10 large $v_j$. Other settings were also same as the former section.

For the three cyclic methods: CBB1, CBB2 and CP, the best $m$ among $\{3,4,\ldots,10\}$ was chosen. In particular, $m=3$ for CBB1 and $m=4$ for CBB2 and CP. As in \cite{frassoldati2008new}, the parameters used by the ABBmin1 and ABBmin2 methods are set to $\tau=0.8$, $m=9$ and $\tau=0.9$, respectively. While $\tau=0.1$ was used for the ABB method which is better than 0.15 and 0.2 in our test. The pair $(h,m)$ of the SDC method was set to $(8,6)$ which is better than other choices. As for our ATC1 method, we set $m=30$ for the first and fifth sets of problems and $m=8$ for other sets.




\begin{table}[ht!b]
\setlength{\tabcolsep}{0.3ex}
\begin{center}
\caption{Distributions of $v_j$.}\label{tbspe}
\begin{tabular}{|c|c|}
 \hline
 \multirow{1}{*}{Set} &\multicolumn{1}{c|}{Spectrum} \\
\hline
\multirow{1}{*}{1} &$\{v_2,\ldots,v_{n-1}\}\subset(1,\kappa)$	\\
\hline
 \multirow{2}{*}{2}
&$\{v_2,\ldots,v_{n/5}\}\subset(1,100)$	\\
&$\{v_{n/5+1},\ldots,v_{n-1}\}\subset(\frac{\kappa}{2},\kappa)$	\\
 \hline

\multirow{2}{*}{3}
&$\{v_2,\ldots,v_{n/2}\}\subset(1,100)$	\\
&$\{v_{n/2+1},\ldots,v_{n-1}\}\subset(\frac{\kappa}{2},\kappa)$	\\
 \hline
\multirow{2}{*}{4}
&$\{v_2,\ldots,v_{4n/5}\}\subset(1,100)$	\\
&$\{v_{4n/5+1},\ldots,v_{n-1}\}\subset(\frac{\kappa}{2},\kappa)$	\\
 \hline
\multirow{3}{*}{5}
&$\{v_2,\ldots,v_{n/5}\}\subset(1,100)$	\\
&$\{v_{n/5+1},\ldots,v_{4n/5}\}\subset(100,\frac{\kappa}{2})$	\\
&$\{v_{4n/5+1},\ldots,v_{n-1}\}\subset(\frac{\kappa}{2},\kappa)$	\\
 \hline
 \multirow{2}{*}{6}
&$\{v_2,\ldots,v_{10}\}\subset(1,100)$	\\
&$\{v_{11},\ldots,v_{n-1}\}\subset(\frac{\kappa}{2},\kappa)$	\\
 \hline
 \multirow{2}{*}{7}
&$\{v_2,\ldots,v_{n-10}\}\subset(1,100)$	\\
&$\{v_{n-9},\ldots,v_{n-1}\}\subset(\frac{\kappa}{2},\kappa)$	\\
 \hline
\end{tabular}
\end{center}
\end{table}

It can be observed from Table \ref{tb3} that, the rand scheme performs worse than the fixed scheme with $\gamma_k=1$, i.e., the BB method. The ATC method is competitive with the BB method and dominates the ALBB and CP methods for most test problems. The CBB2 method clearly outperforms the other two cyclic methods: CBB1 and CP. Moreover, the CBB2 method performs much better than the ATC and ABB methods except the first and fifth sets of problems. The CBB2 method is even faster than the DY method. Although the ABBmin2 method is the fastest one for solving the first and sixth sets of problems, it is worse than the ABBmin1, SDC and ATC1 methods for the other five sets of problems. Our ATC1 method is faster than the CBB2 method except the last set of problems. In addition, the ATC1 method is much better than the ABBmin1 and SDC methods on the first set of problems and comparable to them on the other sets of problems. Furthermore, for each tolerance, our ATC1 method is the fastest one in the sense of total number of iterations.

\begin{table}[h]
\begin{center}
\caption{Number of iterations of compared methods on problems in Table \ref{tbspe}.}\label{tb3}
\setlength{\leftskip}{-20pt}
\resizebox{1.2\textwidth}{45mm}{
\scriptsize
\begin{tabular}{ccccccccccccccc}
 \hline
\multirow{2}{*}{Set} &\multirow{2}{*}{$\epsilon$} &\multicolumn{13}{c}{Method} \\
 \cline{3-15}
& &RAND &BB &ALBB &ABB &CBB1 &CBB2 &CP &DY &ABBmin1 &ABBmin2 &SDC &ATC &ATC1\\
\hline
\multirow{3}{*}{1}
  &$10^{-6}$  &\multirow{1}{*}{487.8}  &\multirow{1}{*}{491.6}  &\multirow{1}{*}{521.3}   &292.6   &558.5      &538.3     &702.5   &\multirow{1}{*}{352.0} &\multirow{1}{*}{403.5} &\multirow{1}{*}{261.7} &\multirow{1}{*}{385.6} &\multirow{1}{*}{474.5}         &356.7   \\
  &$10^{-9}$  &\multirow{1}{*}{5217.4} &\multirow{1}{*}{3471.6} &\multirow{1}{*}{8502.9}  &1020.8  &4715.9     &2866.7    &9019.7  &\multirow{1}{*}{3202.1} &\multirow{1}{*}{2424.2} &\multirow{1}{*}{509.3} &\multirow{1}{*}{2265.5} &\multirow{1}{*}{2733.1}  &1550.9   \\
  &$10^{-12}$ &\multirow{1}{*}{9328.7} &\multirow{1}{*}{7241.4} &\multirow{1}{*}{10604.0} &1456.1  &8391.7     &5300.3    &11156.0 &\multirow{1}{*}{6676.7} &\multirow{1}{*}{4695.5} &\multirow{1}{*}{660.3} &\multirow{1}{*}{3786.6} &\multirow{1}{*}{4352.1} &2608.8  \\
\hline

\multirow{3}{*}{2}
 &$10^{-6}$  &\multirow{1}{*}{482.1}  &\multirow{1}{*}{433.4}  &\multirow{1}{*}{410.4}   &334.1   &322.1      &245.2     &611.9   &\multirow{1}{*}{373.1} &\multirow{1}{*}{202.5} &\multirow{1}{*}{343.1} &\multirow{1}{*}{224.4} &\multirow{1}{*}{443.3} 			     &215.9    \\
 &$10^{-9}$  &\multirow{1}{*}{2380.6} &\multirow{1}{*}{1938.6} &\multirow{1}{*}{2433.6}  &1558.7  &1629.8     &865.0     &6014.9  &\multirow{1}{*}{1565.1} &\multirow{1}{*}{782.5} &\multirow{1}{*}{1397.6} &\multirow{1}{*}{837.6} &\multirow{1}{*}{1984.0}     &794.9    \\
 &$10^{-12}$ &\multirow{1}{*}{4234.4} &\multirow{1}{*}{3211.8} &\multirow{1}{*}{3970.4}  &2629.6  &2752.4     &1465.9    &8527.3  &\multirow{1}{*}{2789.8} &\multirow{1}{*}{1215.4} &\multirow{1}{*}{2235.1} &\multirow{1}{*}{1398.1} &\multirow{1}{*}{3456.8}   &1283.3  \\

\hline

\multirow{3}{*}{3}
 &$10^{-6}$  &\multirow{1}{*}{611.7} &\multirow{1}{*}{527.9}  &\multirow{1}{*}{513.0}    &492.6   &472.1      &274.2     &900.5  &\multirow{1}{*}{460.8} &\multirow{1}{*}{250.6} &\multirow{1}{*}{509.3} &\multirow{1}{*}{301.0} &\multirow{1}{*}{559.3} 				   &272.4    \\
 &$10^{-9}$  &\multirow{1}{*}{2499.9} &\multirow{1}{*}{1918.0} &\multirow{1}{*}{2315.7}  &1723.7  &1762.1     &888.7     &5426.7 &\multirow{1}{*}{1671.8} &\multirow{1}{*}{753.1} &\multirow{1}{*}{1560.5} &\multirow{1}{*}{931.1} &\multirow{1}{*}{2109.9}     &830.7    \\
 &$10^{-12}$ &\multirow{1}{*}{4257.4} &\multirow{1}{*}{3180.6} &\multirow{1}{*}{4122.8}  &2747.9  &2904.7     &1459.4    &8540.3 &\multirow{1}{*}{2791.3} &\multirow{1}{*}{1230.7} &\multirow{1}{*}{2521.9} &\multirow{1}{*}{1440.9} &\multirow{1}{*}{3340.7}   &1398.5  \\

\hline

\multirow{3}{*}{4}
 &$10^{-6}$  &\multirow{1}{*}{790.2}  &\multirow{1}{*}{684.2}  &\multirow{1}{*}{627.8}    &599.3   &550.2      &344.6     &1085.1   &\multirow{1}{*}{560.3} &\multirow{1}{*}{312.7} &\multirow{1}{*}{653.6} &\multirow{1}{*}{355.6} &\multirow{1}{*}{714.4}      &327.8    \\
 &$10^{-9}$  &\multirow{1}{*}{2662.4} &\multirow{1}{*}{2060.8} &\multirow{1}{*}{2315.7}   &1827.5  &1777.6     &930.7     &6134.2   &\multirow{1}{*}{1814.3} &\multirow{1}{*}{871.4} &\multirow{1}{*}{1736.6} &\multirow{1}{*}{915.8} &\multirow{1}{*}{2382.2}   &886.1    \\
 &$10^{-12}$ &\multirow{1}{*}{4477.3} &\multirow{1}{*}{3135.3} &\multirow{1}{*}{4035.0}   &3024.8  &2971.4     &1515.4    &8898.2   &\multirow{1}{*}{2996.0} &\multirow{1}{*}{1311.9} &\multirow{1}{*}{2672.2} &\multirow{1}{*}{1480.0} &\multirow{1}{*}{3594.2} &1362.9  \\

\hline

\multirow{3}{*}{5}
 &$10^{-6}$   &\multirow{1}{*}{1248.1} &\multirow{1}{*}{1119.5} &\multirow{1}{*}{1361.4}   &870.2   &1247.4     &1124.9    &2049.5   &\multirow{1}{*}{841.9} &\multirow{1}{*}{948.8} &\multirow{1}{*}{1072.6} &\multirow{1}{*}{925.8} &\multirow{1}{*}{1632.5} 	   &901.0    \\
 &$10^{-9}$   &\multirow{1}{*}{6536.9} &\multirow{1}{*}{5222.7} &\multirow{1}{*}{9072.8}   &3187.7  &6045.2     &4903.1    &9313.9   &\multirow{1}{*}{4188.9} &\multirow{1}{*}{3932.2} &\multirow{1}{*}{3679.1} &\multirow{1}{*}{4149.7} &\multirow{1}{*}{6185.7}     &3339.1   \\
 &$10^{-12}$  &\multirow{1}{*}{9873.3} &\multirow{1}{*}{8451.2} &\multirow{1}{*}{10733.9}  &4944.8  &9206.7     &8306.6    &11347.6  &\multirow{1}{*}{7803.7} &\multirow{1}{*}{6305.4} &\multirow{1}{*}{5732.7} &\multirow{1}{*}{6722.3} &\multirow{1}{*}{9098.2}    &5474.6  \\
\hline

\multirow{3}{*}{6}
 &$10^{-6}$  &\multirow{1}{*}{295.1}  &\multirow{1}{*}{265.8}  &\multirow{1}{*}{212.7}   &183.1   &231.9      &141.7     &411.0    &\multirow{1}{*}{189.2} &\multirow{1}{*}{124.3} &\multirow{1}{*}{142.3} &\multirow{1}{*}{162.7} &\multirow{1}{*}{272.8} 			 &135.0    \\
 &$10^{-9}$  &\multirow{1}{*}{2089.6} &\multirow{1}{*}{1577.1} &\multirow{1}{*}{1552.5}  &1036.3  &1449.0     &644.5     &5093.5   &\multirow{1}{*}{1245.2} &\multirow{1}{*}{637.9} &\multirow{1}{*}{637.4} &\multirow{1}{*}{704.3} &\multirow{1}{*}{1558.7}   &604.7    \\
 &$10^{-12}$ &\multirow{1}{*}{3888.4} &\multirow{1}{*}{2516.6} &\multirow{1}{*}{3159.2}  &1659.9  &2474.0     &1117.4    &8513.8   &\multirow{1}{*}{2341.5} &\multirow{1}{*}{1048.3} &\multirow{1}{*}{914.3} &\multirow{1}{*}{1233.4} &\multirow{1}{*}{2662.1} &1004.0  \\
 \hline

\multirow{3}{*}{7}
 &$10^{-6}$  &\multirow{1}{*}{1058.1} &\multirow{1}{*}{933.5}  &\multirow{1}{*}{1048.5}   &868.5   &610.4      &332.8     &834.1   &\multirow{1}{*}{796.4} &\multirow{1}{*}{392.3} &\multirow{1}{*}{797.3} &\multirow{1}{*}{468.1} &\multirow{1}{*}{818.6} 	 &418.7     \\
 &$10^{-9}$  &\multirow{1}{*}{2641.2} &\multirow{1}{*}{2216.7} &\multirow{1}{*}{2393.2}   &2116.2  &1308.4     &713.6     &2040.6  &\multirow{1}{*}{1896.7} &\multirow{1}{*}{837.6} &\multirow{1}{*}{1707.7} &\multirow{1}{*}{962.6} &\multirow{1}{*}{1772.2}   &934.7     \\
 &$10^{-12}$ &\multirow{1}{*}{4166.0} &\multirow{1}{*}{3210.6} &\multirow{1}{*}{3774.9}   &3094.9  &2043.1     &1075.0    &3241.7  &\multirow{1}{*}{2945.7} &\multirow{1}{*}{1253.0} &\multirow{1}{*}{2467.6} &\multirow{1}{*}{1397.0} &\multirow{1}{*}{2652.3} &1354.3   \\
\hline
\multirow{3}{*}{Total}
 &$10^{-6}$   &4973.1 &4455.9 &4695.1 &3640.4 &3992.6 &3001.7 &6594.6 &3573.7 &2634.7 &3779.9 &2823.2 &4915.4 &2627.5  \\
 &$10^{-9}$   &24028.0 &18405.5 &28586.4 &12470.9 &18688.0 &11812.3 &43043.5 &15584.1 &10238.9 &11228.2 &10766.6 &18725.8 &8941.1  \\
 &$10^{-12}$  &40225.5 &30947.5 &40400.2 &19558.0 &30744.0 &20240.0 &60224.9 &28344.7 &17060.2 &17204.1 &17458.3 &29156.4 &14486.4 \\
\hline
\end{tabular}}
\end{center}
\end{table}

Then, the compared methods were applied to the non-rand quadratic minimization problem in \cite{de2014efficient} which is more difficult than its rand counterpart. In particular, $A$ is diagonal whose elements are given by
\begin{equation}\label{pro2}
  A_{jj}=\left\{
     \begin{array}{ll}
       1, & \hbox{$j=1$,} \\
       10^{\frac{ncond}{n-1}(n-j)}, & \hbox{$j=2,\ldots,n-1$,} \\
       \kappa, & \hbox{$j=n$,}
     \end{array}
   \right.
\end{equation}
where $ncond=\log_{10} \kappa$, and the vector $b$ is null. We tested 10,000-dimensional problems with three different condition numbers: $\kappa=10^4, 10^5, 10^6$. The stopping condition \eqref{eqstop} was employed with $\epsilon=10^{-6}, 10^{-9}, 10^{-12}$ for all methods. For each value of $\kappa$ or $\epsilon$, 10 different starting points with entries in $[-10,10]$ were randomly generated.

Due to the performances of these compared methods on the above problems, only fast methods were tested, i.e., ABB, CBB2, DY, ABBmin1, ABBmin2, SDC, ATC and ATC1. The numbers of iterations averaged over those starting points of each method are listed in Table \ref{tb4}. Here, the results of the SDC method were obtained with the best choice of parameter setting in \cite{de2014efficient}, i.e., $h=30$ and $m=2$.

From Table \ref{tb4} we can see that the ATC method is slightly better than the CBB2 method and comparable to the DY method for most problems. The ABB method performs better than the ABBmin1 method. Although the ABB method is slower than the ABBmin2 method when the tolerance is low, it wins if a tight tolerance is required. Our ATC1 method is competitive with other methods especially for a tight tolerance. Moreover, our ATC1 method takes least total number of iterations to meet the required tolerance.

\begin{table}[ht!b]
\begin{center}
\caption{Number of iterations of compared methods on non-rand quadratic problems.}\label{tb4}
\setlength{\leftskip}{-12pt}
\scriptsize
\begin{tabular}{cccccccccc}
 \hline
\multirow{2}{*}{$\epsilon$} &\multirow{2}{*}{$\kappa$} &\multicolumn{8}{c}{Method} \\
 \cline{3-10}
& &ABB  &CBB2 &DY &ABBmin1 &ABBmin2 &SDC &ATC &ATC1\\
\hline
\multirow{3}{*}{$10^{-6}$}
&$10^4$   &531.3     &670.7        &\multirow{1}{*}{517.4} &\multirow{1}{*}{531.1} &\multirow{1}{*}{510.4} &\multirow{1}{*}{547.3} &\multirow{1}{*}{574.4}              &558.8  \\
&$10^5$   &938.1     &1167.5       &\multirow{1}{*}{996.0} &\multirow{1}{*}{974.6} &\multirow{1}{*}{872.0} &\multirow{1}{*}{990.6} &\multirow{1}{*}{1097.2}             &1011.6 \\
&$10^6$   &1368.4    &1550.6       &\multirow{1}{*}{1452.4} &\multirow{1}{*}{1362.9} &\multirow{1}{*}{1299.4} &\multirow{1}{*}{1430.8} &\multirow{1}{*}{1511.0}         &1408.7 \\
\hline

\multirow{3}{*}{$10^{-9}$}
&$10^4$    &1235.2    &1581.8      &\multirow{1}{*}{1243.7} &\multirow{1}{*}{1277.2} &\multirow{1}{*}{1284.5} &\multirow{1}{*}{1203.7} &\multirow{1}{*}{1340.2}         &1289.9 \\
&$10^5$    &2593.3    &3443.5      &\multirow{1}{*}{3081.4} &\multirow{1}{*}{2846.5} &\multirow{1}{*}{2826.8} &\multirow{1}{*}{2694.9} &\multirow{1}{*}{3155.9}         &2719.7 \\
&$10^6$    &3873.3    &4718.3      &\multirow{1}{*}{4807.9} &\multirow{1}{*}{4192.1} &\multirow{1}{*}{3869.5} &\multirow{1}{*}{4102.6} &\multirow{1}{*}{4439.5}         &4004.9 \\
\hline

\multirow{3}{*}{$10^{-12}$}
&$10^4$   &2365.6    &2839.0      &\multirow{1}{*}{2132.7} &\multirow{1}{*}{2083.7} &\multirow{1}{*}{3042.0} &\multirow{1}{*}{1891.7} &\multirow{1}{*}{2546.6}         &2092.9 \\
&$10^5$   &7331.1    &9015.1      &\multirow{1}{*}{9696.2} &\multirow{1}{*}{7916.3} &\multirow{1}{*}{7491.7} &\multirow{1}{*}{7304.3} &\multirow{1}{*}{8844.2}         &7238.1 \\
&$10^6$   &12124.9   &14050.5     &\multirow{1}{*}{17329.3} &\multirow{1}{*}{12792.4} &\multirow{1}{*}{11653.1} &\multirow{1}{*}{11775.5} &\multirow{1}{*}{15157.7}    &11600.6\\
\hline

\multicolumn{1}{c}{Total} &
 &32361.2 &39037.0 &41257.0 &33976.8 &32849.4 &31941.4 &38666.7 &31925.2      \\

\hline
\end{tabular}
\end{center}
\end{table}

\section{Conclusions}
We have proposed a family of spectral gradient methods which calculates the stepsize by the convex combination of the long BB stepsize and the short BB stepsize, i.e., $\alpha_k=\gamma_k\alpha_k^{BB1}+(1-\gamma_k)\alpha_k^{BB2}$. Similar to the two BB stepsizes, each stepsize in the family possesses certain quasi-Newton property. In addition, $R$-superlinear and $R$-linear convergence of the family were established for two-dimensional and $n$-dimensional strictly convex quadratics, respectively. Furthermore, with different choices of the parameter $\gamma_k$, we obtain different stepsizes and gradient methods. The family provides us an alternative for the stepsize of gradient methods which can be easily extended to general problems by incorporating some line searches.

Since the parameter $\gamma_k$ affects the value of $\alpha_k$ and hence the efficiency of the method, it is interesting to investigate how to choose a proper $\gamma_k$ to achieve satisfactory performance. We have proposed and tested three different selection rules for $\gamma_k$, among which {{the adaptive truncated cyclic scheme with the long BB stepsize $\alpha_k^{BB1}$, i.e., the ATC1 method performs best. In addition, our ATC1 method is comparable to the state-of-the-art gradient methods including the Dai-Yuan, ABBmin1, ABBmin2 and SDC methods. One interesting question is how to design an efficient scheme to choose $m$ adaptively for the proposed method. This will be our future work.
}}



\begin{thebibliography}{10}
\providecommand{\url}[1]{{#1}}
\providecommand{\urlprefix}{URL }
\expandafter\ifx\csname urlstyle\endcsname\relax
  \providecommand{\doi}[1]{DOI~\discretionary{}{}{}#1}\else
  \providecommand{\doi}{DOI~\discretionary{}{}{}\begingroup
  \urlstyle{rm}\Url}\fi

\bibitem{akaike1959successive}
Akaike, H.: On a successive transformation of probability distribution and its
  application to the analysis of the optimum gradient method.
\newblock Ann. Inst. Stat. Math. \textbf{11}(1), 1--16 (1959)

\bibitem{Barzilai1988two}
Barzilai, J., Borwein, J.M.: Two-point step size gradient methods.
\newblock IMA J. Numer. Anal. \textbf{8}(1), 141--148 (1988)

\bibitem{birgin2000nonmonotone}
Birgin, E.G., Mart{\'\i}nez, J.M., Raydan, M.: Nonmonotone spectral projected
  gradient methods on convex sets.
\newblock SIAM J. Optim. \textbf{10}(4), 1196--1211 (2000)

\bibitem{birgin2014spectral}
Birgin, E.G., Mart{\'\i}nez, J.M., Raydan, M., et~al.: Spectral projected
  gradient methods: review and perspectives.
\newblock J. Stat. Softw. \textbf{60}(3), 539--559 (2014)

\bibitem{broyden1965class}
Broyden, C.G.: A class of methods for solving nonlinear simultaneous equations.
\newblock Math. comput. \textbf{19}(92), 577--593 (1965)

\bibitem{cauchy1847methode}
Cauchy, A.: M{\'e}thode g{\'e}n{\'e}rale pour la r{\'e}solution des systemes
  d¡¯{\'e}quations simultan{\'e}es.
\newblock Comp. Rend. Sci. Paris \textbf{25}, 536--538 (1847)

\bibitem{dai2003alternate}
Dai, Y.H.: Alternate step gradient method.
\newblock Optimization \textbf{52}(4-5), 395--415 (2003)

\bibitem{dai2013new}
Dai, Y.H.: A new analysis on the Barzilai--Borwein gradient method.
\newblock J. Oper. Res. Soc. China \textbf{2}(1), 187--198 (2013)

\bibitem{dai2015positive}
Dai, Y.H., Al-Baali, M., Yang, X.: A positive Barzilai--Borwein-like stepsize
  and an extension for symmetric linear systems.
\newblock In: Numerical Analysis and Optimization, pp. 59--75. Springer (2015)

\bibitem{dai2005asymptotic}
Dai, Y.H., Fletcher, R.: On the asymptotic behaviour of some new gradient
  methods.
\newblock Math. Program. \textbf{103}(3), 541--559 (2005)

\bibitem{dai2005projected}
Dai, Y.H., Fletcher, R.: Projected Barzilai--Borwein methods for large-scale
  box-constrained quadratic programming.
\newblock Numer. Math. \textbf{100}(1), 21--47 (2005)

\bibitem{dai2006cyclic}
Dai, Y.H., Hager, W.W., Schittkowski, K., Zhang, H.: The cyclic
  Barzilai--Borwein method for unconstrained optimization.
\newblock IMA J. Numer. Anal. \textbf{26}(3), 604--627 (2006)

\bibitem{dai2016barzilai}
Dai, Y.H., Kou, C.: A Barzilai--Borwein conjugate gradient method.
\newblock Sci. China Math. \textbf{59}, 1511--1524 (2016)

\bibitem{dai2002r}
Dai, Y.H., Liao, L.Z.: $R$-linear convergence of the Barzilai and Borwein
  gradient method.
\newblock IMA J. Numer. Anal. \textbf{22}(1), 1--10 (2002)

\bibitem{dai2003altermin}
Dai, Y.H., Yuan, Y.X.: Alternate minimization gradient method.
\newblock IMA J. Numer. Anal. \textbf{23}(3), 377--393 (2003)


\bibitem{dai2005analysis}
Dai, Y.H., Yuan, Y.X.: Analysis of monotone gradient methods.
\newblock J. Ind. Mang. Optim. \textbf{1}(2), 181 (2005)


\bibitem{de2014efficient}
De~Asmundis, R., Di~Serafino, D., Hager, W.W., Toraldo, G., Zhang, H.: An
  efficient gradient method using the Yuan steplength.
\newblock Comp. Optim. Appl. \textbf{59}(3), 541--563 (2014)


\bibitem{de2013spectral}
De~Asmundis, R., Di~Serafino, D., Riccio, F., Toraldo, G.: On spectral
  properties of steepest descent methods.
\newblock IMA J. Numer. Anal. \textbf{33}(4), 1416--1435 (2013)


\bibitem{dennis1977quasi}
Dennis Jr, J.E., Mor{\'e}, J.J.: Quasi-Newton methods, motivation and theory.
\newblock SIAM Rev. \textbf{19}(1), 46--89 (1977)



\bibitem{di2018steplength}
Di~Serafino, D., Ruggiero, V., Toraldo, G., Zanni, L.: On the steplength
  selection in gradient methods for unconstrained optimization.
\newblock Appl. Math. Comput. \textbf{318}, 176--195 (2018)

\bibitem{dolan2002}
Dolan, E.D., Mor{\'e}, J.J.: Benchmarking optimization software with
  performance profiles.
\newblock Math. Program. \textbf{91}(2), 201--213 (2002)

\bibitem{fletcher2005barzilai}
Fletcher, R.: On the Barzilai--Borwein method.
\newblock In: Optimization and Control with Applications, pp. 235--256 (2005)


\bibitem{frassoldati2008new}
Frassoldati, G., Zanni, L., Zanghirati, G.: New adaptive stepsize selections in
  gradient methods.
\newblock J. Ind. Mang. Optim. \textbf{4}(2), 299 (2008)


\bibitem{friedlander1998gradient}
Friedlander, A., Mart{\'\i}nez, J.M., Molina, B., Raydan, M.: Gradient method
  with retards and generalizations.
\newblock SIAM J. Numer. Anal. \textbf{36}(1), 275--289 (1998)

\bibitem{gonzaga2016steepest}
Gonzaga, C.C., Schneider, R.M.: On the steepest descent algorithm for quadratic
  functions.
\newblock Comp. Optim. Appl. \textbf{63}(2), 523--542 (2016)


\bibitem{grippo1986nonmonotone}
Grippo, L., Lampariello, F., Lucidi, S.: A nonmonotone line search technique
  for Newton's method.
\newblock SIAM J. Numer. Anal. \textbf{23}(4), 707--716 (1986)

\bibitem{huang2016smoothing}
Huang, Y., Liu, H.: Smoothing projected Barzilai--Borwein method for
  constrained non-lipschitz optimization.
\newblock Comp. Optim. Appl. \textbf{65}(3), 671--698 (2016)

\bibitem{huang2015quadratic}
Huang, Y., Liu, H., Zhou, S.: Quadratic regularization projected
  Barzilai--Borwein method for nonnegative matrix factorization.
\newblock Data Min. Knowl. Disc. \textbf{29}(6), 1665--1684 (2015)

\bibitem{jiang2013feasible}
Jiang, B., Dai, Y.H.: Feasible Barzilai--Borwein-like methods for extreme
  symmetric eigenvalue problems.
\newblock Optim. Method Softw. \textbf{28}(4), 756--784 (2013)


\bibitem{kalousek2017steepest}
Kalousek, Z.: Steepest descent method with random step lengths.
\newblock Found. Comput. Math. \textbf{17}(2), 359--422
  (2017)


\bibitem{liu2011coordinated}
Liu, Y.F., Dai, Y.H., Luo, Z.Q.: Coordinated beamforming for MISO interference
  channel: Complexity analysis and efficient algorithms.
\newblock IEEE Trans. Signal Process. \textbf{59}(3), 1142--1157 (2011)

\bibitem{nocedal2002behavior}
Nocedal, J., Sartenaer, A., Zhu, C.: On the behavior of the gradient norm in
  the steepest descent method.
\newblock Comp. Optim. Appl. \textbf{22}(1), 5--35 (2002)

\bibitem{raydan1993Barzilai}
Raydan, M.: On the Barzilai and Borwein choice of steplength for the gradient
  method.
\newblock IMA J. Numer. Anal. \textbf{13}(3), 321--326 (1993)

\bibitem{raydan1997barzilai}
Raydan, M.: The Barzilai and Borwein gradient method for the large scale
  unconstrained minimization problem.
\newblock SIAM J. Optim. \textbf{7}(1), 26--33 (1997)

\bibitem{raydan2002relaxed}
Raydan, M., Svaiter, B.F.: Relaxed steepest descent and Cauchy-Barzilai-Borwein
  method.
\newblock Comp. Optim. Appl. \textbf{21}(2), 155--167 (2002)


\bibitem{tan2016barzilai}
Tan, C., Ma, S., Dai, Y.H., Qian, Y.: Barzilai--Borwein step size for
  stochastic gradient descent.
\newblock In: Advances in Neural Information Processing Systems, pp. 685--693
  (2016)

\bibitem{wang2007projected}
Wang, Y., Ma, S.: Projected Barzilai--Borwein method for large-scale
  nonnegative image restoration.
\newblock Inverse Probl. Sci. En. \textbf{15}(6), 559--583 (2007)

\bibitem{wright2009sparse}
Wright, S.J., Nowak, R.D., Figueiredo, M.A.: Sparse reconstruction by separable
  approximation.
\newblock IEEE Trans. Signal Process. \textbf{57}(7), 2479--2493 (2009)


\bibitem{yuan2008step}
Yuan, Y.X.: Step-sizes for the gradient method.
\newblock AMS IP Studies in Advanced Mathematics \textbf{42}(2), 785--796
  (2008)

\bibitem{zhou2006gradient}
Zhou, B., Gao, L., Dai, Y.H.: Gradient methods with adaptive step-sizes.
\newblock Comp. Optim. Appl. \textbf{35}(1), 69--86 (2006)

\end{thebibliography}

\end{document}